\newtheorem{notation}{Notation}
\newcommand{\be}{\begin{equation}}
\newcommand{\ee}{\end{equation}}
\newcommand{\h}{\lim_{h\rightarrow0}}
\newcommand{\N}{\mathbb{N}}
\newcommand{\R}{\mathbb{R}}
\begin{document}
\title{Evolutionary game of coalition building under external pressure}
\author{Alekos Cecchin and Vassili N. Kolokoltsov}
\institute{Alekos Cecchin 
\at Department of Mathematics, University of Padua, 
 Via Trieste 63, Padova, Italy,
\newline
  \email{acecchin@math.unipd.it}
\and Vassili N.Kolokoltsov \at Department of Statistics, University of Warwick,
Coventry, CV4 7AL, UK
\newline
 \email{v.kolokoltsov@warwick.ac.uk}}
\maketitle

\abstract{
We study the fragmentation-coagulation, or merging and splitting, model as introduced in \cite{V}, where $N$ small players can form coalitions to resist to the pressure exerted by the principal. It is a Markov chain in continuous time and the players have a common reward to optimize. 
 We study the behavior as $N$ grows  and show that the problem converges to a (one player) deterministic optimization problem in continuous time, in the infinite dimensional state space $\ell^1$.
We apply the method developed in \cite{M}, adapting it to our  different framework. We use tools involving dynamics in $\ell^1$, generators of Markov processes, martingale problems and coupling of Markov chains.
\keywords{Fragmentation-coagulation, merging and splitting, evolutionary coalition formation, Markov decision process, major agent, mean field limit.}
}

\abstract*{
We study the fragmentation-coagulation, or merging and splitting, model as introduced in \cite{V}, where $N$ small players can form coalitions to resist to the pressure exerted by the principal. It is a Markov chain in continuous time and the players have a common reward to optimize. 
 We study the behavior as $N$ grows  and show that the problem converges to a (one player) deterministic optimization problem in continuous time, in the infinite dimensional state space $\ell^1$.
We apply the method developed in \cite{M}, adapting it to our  different framework. We use tools involving dynamics in $\ell^1$, generators of Markov processes, martingale problems and coupling of Markov chains.
\keywords{Fragmentation-coagulation, merging and splitting, evolutionary coalition formation, Markov decision process, major agent, mean field limit.}
}

\section{Introduction}


In this paper  we study dynamic optimization problems on Markov decision processes composed of
a large number of interacting agents, in particular we investigate the so called fragmentation-coagulation, or merging and splitting, model.
Our aim is to analyze its limit as the number of players tends to infinity.

Following Kolokoltsov \cite{V} we describe a model  which is  a Markov chain in continuous time. A natural reaction of the society
of small players to the pressure exerted by the principal can be executed by forming stable
groups that can confront this pressure in an effective manner (but possibly imposing
certain obligatory regulations for the members of the group). Analysis of such possibility
leads one naturally to models of mean field enhanced coagulation processes under external
pressure. The major player can change her strategy only in discrete deterministic time.

Coagulation fragmentation processes are well studied in statistical physics, see
e. g. \cite{71}. In particular, general mass exchange processes, that in our social environment
become general coalition forming processes preserving the total number of participants,
were analyzed in \cite{48} and \cite{50} with their law of large number limits for discrete and general
state spaces. In the same way problems in economics, like merging banks
or firms on the market, were studied in \cite{75} and \cite{78}. While an application to 
 scientific citation networks or the network of internet links is discussed in \cite{62}. 
Some simple situation of nonlinear Markov games on a finite state space was analyzed in \cite{Kol}, proving the convergence of Nash-equilibria for finite games to equilibria of a limiting deterministic differential game.


Very recently, several authors have studied games  of coalition formation. 
A notion of core equilibrium in proposed in \cite{I}, and found via a fixed point method. 
An application to contracts and networks is analyzed in \cite{jkk}. 
A study of the incentives offered by the government to municipalities
to merge into larger groups is provided in \cite{w}.
 Players preferences over winning coalitions are derived  by applying
strongly monotonic power indices on the game in \cite{karos}, where  the author also investigate
whether there are core stable coalitions . 
An application of systems of coalition formation to the climate change problem is discussed in \cite{finus}, where also numerical simulation are performed.

Here we are interested in the response of such systems to external parameters
that may be set by the principal who has her own
agenda. Thus we add to the analysis a major player fitting
the model to a more general framework.
There are two main difficulties in studying this model. Firstly the total number of  coalitions is not constant in time, as they can merge or split. Secondly the dynamics both of the system of small players and of the limiting system are supposed to lie on the infinite dimensional space $\ell^1$, which can be viewed also as a space of measures, instead of a fixed $\R^d$. In fact the dimension of the state space for the system of coalitions grows as the number $N$ of small players tends to infinity. If the system is in the state $x\in \ell^1$ then $x_k = h n_k$ where $n_k$ is the number of coalitions of size $k$ and $h$ is a suitable parameter depending on $N$, for instance the inverse of the initial number of coalitions. 


Our main result is to show that this problem converges, as $N$ grows, to  a one player deterministic optimization problem in continuous time, in the infinite dimensional state space $\ell^1$, the so called mean field limit. We prove convergence of the value functions and provide also an approximated optimal policy for the system of small players. Such optimal policy is usually found by using dynamic programming for the finite horizon case, but this approach suffer
from the curse of dimensionality, which makes them impractical when the state space is too large. 
 Solving the HJB equation for the limiting system numerically is sometimes rather easy. It provides a deterministic optimal policy whose reward is remarkably close to the optimal reward.

We apply the method developed by Gast, Gaujal and Le Boudec in \cite{M}, where the authors obtained the same kind of results , but in a different setting. They consider discrete time Markov chains as prelimit systems whose state space is finite and fixed. 
Their proofs are  in line
with classic mean field arguments and use stochastic approximation techniques.
 Moreover their approach is  algorithmic: they construct two intermediate systems: one with a finite number of
objects controlled by a limit policy and one with a limit system controlled by a stochastic policy
induced by the finite system.


Several papers in the literature are concerned with the problem of mixing the limiting behavior of
a large number of objects with optimization.
In \cite{6}, the value function of the Markov decision process (MDP) is approximated by a linearly
parametrized class of functions and a 
fluid approximation of the MDP is used. It is shown
that a solution of the HJB equation is a value function for a modification of the original MDP
problem. In \cite{25}, the curse of dimensionality of dynamic programming is circumvented by
approximating the value function by linear regression. In \cite{M} they use instead a mean field limit
approximation and prove asymptotic optimality in N of limit policy.
Actually, most of the papers dealing with mean field limits of optimization problems over
large systems are set in a game theory framework, leading to the concept of mean field games, introduced by Lasry and Lions \cite{43}
 and
P.E. Caines, M. Huang and R.P. Malham\'e \cite{HMC}.

Notice finally that in this paper we analyze only a preliminary step for a full game setting with major and minor players, namely the response of the minor players to the action of the major one. The full analysis (not developed here) would include the reaction of the major on the behavior of the minor players and the search for the corresponding  equilibrium. However, this development does not seem to present serious difficulty, since our analysis reduces it effectively to a two-player game: the major and the pool of small players.

\subsection*{Contribution and structure of the paper}

In \cite{V} Kolokoltsov shows the convergence of the optimization problems related to the system of small players to an optimization problem in discrete time for the limiting system (this will be similar to theorem \ref{teo5}). His proof is based on an argument that is focused on the generators of the Markov chains and shows their convergence. In this paper we want to show the convergence to an optimization problem in continuous time, so we apply the ideas from \cite{M} where they used a completely different argument for the proof, focusing on trajectories and constructing two auxiliary systems.

In section 2 we describe properly the fragmentation coagulation model starting from \cite{V}, the limiting system and the related optimization problems. We  define the state space where all the dynamics considered lie, which is a compact set $S\subset\ell^1$, and in the end we state the assumptions we need to obtain the convergence. In section 3 we present our main results and define the two auxiliary systems. 
Then we show how to construct an approximated optimal policy starting from an optimal action function for the mean field limit.
Moreover we consider a class of applications in which a particular choice of the rate functions allows to reduce the limiting problem to an optimization problem in one dimension, providing an explicit solution in a simplified case and a more effective numerical scheme in general. Finally in section 4 we complete the proofs, showing that the general requirements for convergence expressed in \cite{M} can fit to our model, with some modification. We use theorems about semigroups and generators of Markov processes and related martingale problems. Moreover we apply the notion of coupling of  Markov chains and also a particular Markovian coupling.







\section{Model and assumptions}

\subsection{The space $B_+(L,R]$}

We denote, as usual, the space of measures
\begin{equation}
\ell^1(\mathbb{N}) :=\left\{x =(x_1,x_2, \ldots) : x_k \in \mathbb{R} , \quad \sum_k |x_k|<\infty\right\} .
\end{equation}
Denote by $\ell^1_+$  the space of positive measures on $\N$:
$\ell^1_+(\N) := \left\{x \in \ell^1 : x_k\geq0 \right\}$.
The usual norm in $\ell^1$ is 
$||x||_{\ell^1}:= \sum_k |x_k|$.


Let $L:\mathbb{N}\longrightarrow \mathbb{R}$ be the identity function, which means $L(k) = k$. 
We define a new norm
$ ||x||_{\ell^1(L)} := \sum_k L(k) |x_k|$,
so that we can consider the subset of  $\ell^1$
$$ \ell^1(L) := \left\{x\in\ell^1 : ||x||_{\ell^1(L)} <\infty\right\}$$
which is a Banach space equipped with this norm. 

Let us denote by $B(L,R)$ the ball of radius $R$ in $\ell^1(L)$, centered in 0, and 
$\ell^1_+(L) := \ell^1(L) \cap \ell^1_+$,
$B_+(L,R) := B(L,R) \cap \ell^1_+$.

\begin{lemma}
The set $B_+(L,R)$ is relatively compact in the norm topology of $\ell^1$.
\end{lemma}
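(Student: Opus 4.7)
The plan is to verify the classical characterization of relative compactness in $\ell^1$: a bounded subset whose elements have uniformly small tails is relatively compact. Once this is in hand, I will extract a norm-convergent subsequence from any sequence in $B_+(L,R)$ via a diagonal argument combined with the tail bound.

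First I would observe that $B_+(L,R)$ is $\ell^1$-bounded: since $L(k)=k\ge 1$ for every $k\in\N$, any $x\in B_+(L,R)$ satisfies $\|x\|_{\ell^1} = \sum_k |x_k| \le \sum_k k|x_k| = \|x\|_{\ell^1(L)} \le R$. The key tail estimate comes from the weight $L(k)=k$ itself: for every $K\in\N$ and every $x\in B_+(L,R)$,
\begin{equation}
\sum_{k>K} |x_k| \;\le\; \frac{1}{K+1}\sum_{k>K} k|x_k| \;\le\; \frac{R}{K+1},
\end{equation}
which goes to $0$ uniformly in $x\in B_+(L,R)$ as $K\to\infty$. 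This uniform tightness is precisely the extra ingredient (beyond boundedness) that makes compactness possible in an infinite-dimensional sequence space.

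To conclude, I would take an arbitrary sequence $(x^{(n)})\subset B_+(L,R)$. Each coordinate $x^{(n)}_k$ is bounded (by $R$), so a standard diagonal extraction yields a subsequence $(x^{(n_j)})$ converging componentwise to some nonnegative $y=(y_k)$. By Fatou applied to the tail bound, $y\in\ell^1$ and $\sum_{k>K}|y_k|\le R/(K+1)$. Given $\varepsilon>0$, pick $K$ with $R/(K+1)<\varepsilon/3$; then the tails of both $x^{(n_j)}$ and $y$ are each smaller than $\varepsilon/3$, while the finite segment $\sum_{k\le K}|x^{(n_j)}_k-y_k|<\varepsilon/3$ for $j$ large by componentwise convergence. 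Combining these gives $\|x^{(n_j)}-y\|_{\ell^1}<\varepsilon$ eventually. There is no real obstacle here: the only point to appreciate is conceptual, namely that the growing weight $L$ plays the role of a moment/tightness condition, in direct analogy with the Prokhorov criterion for tight families of measures.
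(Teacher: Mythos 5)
Your proof is correct, but it takes a genuinely different route from the paper. The paper treats elements of $B_+(L,R)$ as positive measures on $\N$, derives the same uniform tail bound $\sum_{k\geq n} x_k \leq R/n$ from the weight $L(k)=k$, and then invokes two named theorems: Prohorov's criterion (uniform tightness gives relative compactness in the weak topology) followed by Schur's theorem (weak and norm sequential convergence coincide in $\ell^1$) to upgrade to norm compactness. You instead give a self-contained, elementary argument: coordinatewise boundedness plus a diagonal extraction produce a componentwise limit, Fatou transfers the tail bound to the limit, and an $\varepsilon/3$ split (finite block plus two tails) yields norm convergence of the subsequence. The key estimate is identical in both proofs --- the moment condition $\sum_k k|x_k|\leq R$ acting as a tightness condition, an analogy you correctly identify --- so the difference is purely in how the tail control is converted into compactness. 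Your approach buys self-containedness and slightly more generality (it never uses positivity, so it shows the full ball $B(L,R)$ is relatively compact in $\ell^1$, not just its positive part), at the cost of a longer write-up; the paper's approach is shorter on the page but outsources the work to Prohorov and Schur. Both are complete and correct.
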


\begin{proof}
By \emph{Prohorov}'s compactness criterion  a family  of measures  is relatively compact in the weak topology if and only if it is tight.
We have 
$\sum_k k x_k <R$ for any $x\in B_+(L,R)$. Thus for any $n\in \N$ and any $x\in B_+(L,R)$
$$n \sum_{k\geq n} x_k \leq \sum_{k\geq n} k x_k < \sum_k k x_k <R$$
which gives $\sum_{k\geq n} x_k \leq \frac{R}{n}$ for any $n\in \N$ and any $x\in B_+(L,R)$.
So for any $\epsilon>0$ there exists $n \in N$ such that 
$$x (\N \setminus [0,n]) = \sum_{k\geq n} x_k <\epsilon$$
for all $x\in B_+(L,R)$, which means that the tightness condition is satisfied.

By \emph{Schur}'s theorem, any weakly convergent sequence in $\ell^1$ is actually convergent in the norm of $\ell^1$. 
Therefore the set $B_+(L,R)$ is relatively compact in the topology of $\ell^1$.
\end{proof}

We 
denote by $B_+(L,R]$ the closure of $B_+(L,R)$ in the norm topology of $\ell^1$, which is compact in $\ell^1$, although not in the topology of $\ell^1(L)$.
The set
$$S:=  B_+(L,R]$$ 
 will be the state space for the dynamics considered.

We will assume that the functions defined on $S$ have some regularity. Let $Z$ be a closed convex subset of a normed space $Y$ and  
$f:Z\longrightarrow Y$ a function. Recall that the \emph{directional derivative} $Df(x):Y\longrightarrow Y$ of $f$ in the point $x\in Z$ is a linear form that calculated  in a vector $\xi\in Y$ is defined as $Df(x).\xi := \lim_{t\rightarrow0} \frac{f(x+t\xi) -f(x)}{t}$.
The second order derivative $D^2 f(x)$ is a bilinear form defined as $D^2 f(x). [\xi,\eta] := D(Df(x).\xi).\eta$.
Thus the norms of the derivatives in $Y$ are defined as norms of linear maps:
\begin{align}
||Df(x)||_{Y} &:= \sup_{||\xi|| =1} || Df(x).\xi||_{Y}, \label{der}
\\
||D^2 f(x)||_{Y} &:= \sup_{||\xi||=||\eta||=1} ||D^2 f(x). [\xi,\eta] ||_{Y}.
\end{align}

We say that $f\in\mathcal{C}^1(Z)$ if the function $(x,\xi)\mapsto Df(x).\xi$ is continuous from $Z\times Y$ to $Y$. 
Similarly, $f\in\mathcal{C}^2(Z)$ if the function $(x,\xi,\eta)\mapsto D^2 f(x). [\xi,\eta]$ is continuous from $Z\times Y^2$ to $Y$.
These are subsets of $\mathcal{C}(Z)$ and Banach spaces under the norms 
\begin{align}
||f||_{\mathcal{C}^1(Z)} &:= \sup_{x\in Z} \left\{||Df(x)||_{Y} +||f(x)||_Y \right\}\\
||f||_{\mathcal{C}^2(Z)} &:= \sup_{x\in Z} \left\{||D^2 f(x)||_{Y} +||f(x)||_Y\right\}.
\end{align}

We will use these definitions for the sets $Z=S$, which is convex and compact, and $Y$ to be either $\ell^1$ or $\ell^1(L)$.

\subsection{System of small players}

We describe a so called \emph{fragmentation-coagulation}, or \emph{merging and splitting}, model in which there are $N$  indistinguishable small players that form coalitions to resist to the pressure exerted by a major player, following \cite{V}.

The state space  is
\begin{equation} 
\mathbb{N}^{fin} := \left\{n =(n_1,n_2, \ldots) : \mbox{there is only a finite number of non zero entries}\right\}
\end{equation}
where $n_k\in\mathbb{N}$ denotes the total number of coalitions of size $k$, so the total number of small players is $N=\sum_k k n_k$ and the total number of coalitions is $\sum_k n_k$. The dynamics will be better described in the rescaled space 
\begin{equation}
h\mathbb{N}^{fin} = \left\{x =h n =(x_1,x_2, \ldots)\right\}
\label{hn}
\end{equation}
where $h$ can be taken, for instance, as the inverse of the initial number of coalitions. We want to study the limit as $h\rightarrow 0$. All this $h\mathbb{N}^{fin}$ spaces, as $h$ changes, can be viewed as subspaces of the space $\ell^1$. The total number of players is conserved: this motivates the choice of $L(k)=k$ in the previous section, since  $||x||_{\ell^1(L)} = h N$ 
for any state $x$; we will return to this in 2.5.1.

The dynamics evolves in continuous time as a Markov chain. It is described as follows: 
\begin{itemize}
	\item to any randomly chosen pair of coalitions of size $i$ and $j$ is attached a random exponential clock of parameter $h C_{ij}(x,b)$ so that they merge if it rings;
	\item to any randomly chosen coalition of size $i$ is attached an exponential clock of parameter $F_{ij}(x,b)$ such that, if it rings, the coalition splits into two coalitions of size $j$ and $i-j$ .
\end{itemize}
Here the functions $C$ and $F$ may depend on the whole composition $x$  and $b$ is a control parameter which lies in a compact metric space $(E,d)$. 

The minimum of all these exponential random variables is  an exponential random variable with the parameter  
\be
s=s(x,b):= \sum_{i ,j} n_i n_j h C_{ij}(x,b) + \sum_i n_i F_{ij}(x,b). 
\label{sum}
\ee
When this minimum clock rings, the 
 system goes from the state $n$ to either $n-e_i - e_j+e_{i+j}$ (two coalitions merge) or $n-e_i + e_j+e_{i-j}$ (a coalition splits). 
 The sequence $(e_i)_{i=1}^\infty$ denotes the standard basis in $\R^\infty$. 
The first case happens if the minimum holds for the clock of parameter $h C_{ij}(x,b)$, thus with probability given by
 $ \frac{h C_{i j}(x,b) n_i n_j}{s(x,b)} $. While the second case happens with probability $\frac{F_{ij}(x,b) n_i}{s(x,b)}$.


Hence the infinitesimal generator of this Markov chain on the space $\mathbb{N}^{fin}$ is
\begin{align}
\Lambda_{b,n} G (n) &= s(x,b) \sum_{i,j} \frac{h C_{i j}(x,b) n_i n_j}{s(x,b)} \left[G(n- e_i -  e_j+ e_{i+j}) -G(n)\right] \label{genn}
\\
&+ s(x,b)\sum_i \sum_{i<j} \frac{F_{ij}(x,b) n_i}{s(x,b)} \left[G(n- e_i +  e_j+ e_{i-j}) -G(n)\right].\nonumber
\end{align}
Equation (\ref{genn}) can be equivalently presented as the infinitesimal generator
\begin{align}
\Lambda_{b,h} G(x) &= \frac1h \sum_{i,j} C_{i j}(x,b) x_i x_j \left[G(x-h e_i - h e_j+h e_{i+j}) -G(x)\right] \label{gen}\\
&+ \frac{1}{h}\sum_i \sum_{j<i} F_{ij}(x,b) x_i \left[G(x-h e_i + h e_j+h e_{i-j}) -G(x)\right] \nonumber
\end{align}
of the Markov chain describing the system of small players on the space $h\mathbb{N}^{fin} \subset \ell^1(\mathbb{N})$, for every 
$G\in\mathcal{C}(S)$.

\begin{notation}
$X^h(t,x,b)$ is  the state (in $S$) at time $t$ of the Markov Chain given by this generator (\ref{gen}) which is in $x$ at $t=0$, under the control parameter $b$ given by the major player.
\label{def1}
\end{notation}

 The process $X^h(t,x,b)$ describes  the evolution of the coalitions of small players, which will be also called the system with $N$ agents.

\subsection{Limiting system}

The limiting deterministic evolution, the \emph{mean field limit}, is described by  the so called \emph{Smoluchovski equation}. For every $x$ in the compact subset $ S\subset\ell^1$ the ODE  for the component $i$ is
\begin{align}
\dot{x}_i = f_i(x,b) &:=\sum_{j<i} C_{j,i-j} (x,b) x_j x_{i-j} -2\sum_j C_{ij} (x,b) x_i x_j \label{effe}\\
&+ 2 \sum_{j>i} F_{ji}(x,b)x_j -\sum_{j<i}F_{ij}(x,b)x_i \nonumber.
\end{align}

\begin{notation}
$X(t,x,b)$ is the \emph{flow} at time $t$ of the ODE 
\be
\dot{x}= f(x,b) 
\label{f}
\ee
starting in $x$ at $t=0$ under the control parameter $b\in E$, where $f$ is given by (\ref{effe}). In integral form
\be
X(t,x,b) = x + \int_0^t f(X(s,x,b),b)ds.
\ee
\end{notation}

We view the dynamics given by a deterministic ODE as a Markov process. The semigroup is
\be
U_t G(x) = G(X(t,x))
\label{sem}
\ee
for every 
$G\in\mathcal{C}(S)$, and its generator is  given by
\be
\Lambda G(x) := \sum_i f_i(x) \frac{\partial G}{\partial x_i}(x),
\label{infg}
\ee
for any $G\in\mathcal{C}^1(S)$. The first order partial differential operator defined in 
(\ref{infg}) has characteristics which solve equation (\ref{f}).

So, for the limiting ODE given by (\ref{effe}), the corresponding infinitesimal generator given by (\ref{infg}) is 
\begin{align}
\Lambda_{b} G(x) &=  \sum_{i,j} C_{i j}(x,b) x_i x_j \left[\frac{\partial G}{\partial x_{i+j}} - \frac{\partial G}{\partial x_i} - \frac{\partial G}{\partial x_j}\right] 
\label{inff}\\
&+ \sum_i \sum_{j<i} F_{ij}(x,b) x_i \left[\frac{\partial G}{\partial x_{i-j}} + \frac{\partial G}{\partial x_j} - \frac{\partial G}{\partial x_i}\right]
\nonumber
\end{align}
for any $G\in\mathcal{C}^1(S)$ and $b\in E$.

We can thus deduce that pointwise convergence of the generators holds. Namely, for the generators of the Markov chains defined by (\ref{gen}) and the generator of the deterministic limit defined by (\ref{inff}), we obtain
\be
\lim_{h\rightarrow0} \Lambda_{b,h}G(x) = \Lambda_b G(x)
\ee
for every $G\in\mathcal{C}^1(S)$, $x\in S$ and every $b\in E$.


We show  moreover the convergence in law, for any fixed parameter $b\in E$, of the processes $X^h$ to $X$ in the Skorokhod space $D([0,T],S)$ of cadlag functions, which is the right space where to study these processes. The convergence is then also in probability, as the limit is deterministic, and hence a constant in the Skorokhod space.

\begin{proposition}
Let all the functions $C_{ij}$ and $F_{ij}$ be in $\mathcal{C}^1(S)$. Suppose that the initial points $x(h)$ converge in $\ell^1$ to $x_0$, as $h\rightarrow0$. Then the processes 
$X^h(\cdot, x(h), b)$ converges in law on the Skorokhod space $D([0,T],S)$ to $X(\cdot, x_0, b)$, as $h\rightarrow0$, for any $b\in E$; whereas the processes are defined in notations 1 and 2.
\end{proposition}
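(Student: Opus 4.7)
The plan is to treat this as a standard Markov-process weak-convergence problem in the spirit of Ethier--Kurtz, exploiting the compactness of $S = B_+(L,R]$ in $\ell^1$ established in Lemma~1. The key structural ingredients already available are: (i) compactness of $S$ in the $\ell^1$ norm; (ii) conservation of $||X^h(t)||_{\ell^1(L)} = hN$ under both merging and splitting, so that trajectories never leave $S$; and (iii) the pointwise generator convergence $\Lambda_{b,h} G \to \Lambda_b G$ already noted immediately above the statement. I would upgrade (iii) to uniform convergence on $S$, combine it with tightness to extract weak subsequential limits, and identify any such limit via the well-posed martingale problem associated with the ODE (\ref{f}).

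First, for $G \in \mathcal{C}^1(S)$ with $Y = \ell^1$, a first-order Taylor expansion gives $G(x+h\xi) - G(x) = h\, DG(x).\xi + o(h)$, uniformly in $x\in S$ and in $\xi$ belonging to a bounded set of $\ell^1$. Substituting this into (\ref{gen}) reproduces (\ref{inff}) in the limit, provided the global prefactors $\sum_{i,j} C_{ij}(x,b) x_i x_j$ and $\sum_i \sum_{j<i} F_{ij}(x,b) x_i$ are bounded uniformly on $S$, which follows from continuity of $C_{ij}$, $F_{ij}$ on the compact set $S$ together with $\sum_i x_i \leq R$. This uniform generator convergence is what I expect to be the main technical obstacle, because each jump hits several coordinates simultaneously and the remainder must be controlled using only the $\ell^1$-norm of the jump vector (which is $O(h)$) and the modulus of continuity of $DG$ on the $\ell^1$-compact set $S$, rather than coordinatewise smoothness.

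Next, I would prove tightness of $\{X^h(\cdot, x(h), b)\}$ in $D([0,T], S)$. The compactness of $S$ in $\ell^1$ takes care of compact containment via Jakubowski's criterion. For the modulus-of-continuity piece I would use the martingale decomposition
\begin{equation*}
G(X^h(t)) - G(X^h(0)) - \int_0^t \Lambda_{b,h} G(X^h(s))\, ds = M_t^{G,h}
\end{equation*}
applied to coordinate projections $G(x) = x_i$, invoke the uniform bound on $\Lambda_{b,h} G$ obtained in the previous step, and verify the Aldous criterion on each one-dimensional marginal; since $S$ is $\ell^1$-compact, joint tightness of all coordinates lifts to tightness in $D([0,T], \ell^1)$.

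Finally, any weak subsequential limit $X^*$ satisfies, for every $G \in \mathcal{C}^1(S)$, that $G(X^*(t)) - \int_0^t \Lambda_b G(X^*(s))\, ds$ is a martingale. Choosing $G(x) = x_i$ shows that $X^*$ solves the integral version of (\ref{effe}). Since $C_{ij}, F_{ij} \in \mathcal{C}^1(S)$ makes $f$ Lipschitz on the bounded set $S$, this ODE has a unique solution starting from $x_0$, namely $X(\cdot, x_0, b)$. Uniqueness of the martingale problem then forces $X^* = X(\cdot, x_0, b)$ almost surely, so every subsequence converges to the same deterministic limit and the full family converges in law on $D([0,T], S)$, as claimed; the limit being deterministic, the convergence is in fact in probability.
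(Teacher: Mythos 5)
Your proposal is correct in outline but takes a genuinely different route from the paper. The paper's own proof is a semigroup argument: it observes that $\mathcal{C}^1(S)$ is a dense subspace of $\mathcal{C}(S)$ invariant under the limiting semigroup $(U_t)$ (because $f\in\mathcal{C}^1(S)$), so by Kallenberg's Proposition 17.9 it is a \emph{core} for $\Lambda_b$; uniform convergence $\Lambda_{b,h}G\to\Lambda_b G$ on this core then yields convergence in $D([0,T],S)$ directly via Kallenberg's Theorem 17.25 (a Trotter--Kurtz type approximation theorem), with no explicit tightness or martingale-problem step. You instead follow the Ethier--Kurtz ``compactness plus identification'' scheme: uniform generator convergence, tightness via compact containment in the $\ell^1$-compact set $S$ together with Aldous/Jakubowski, and identification of subsequential limits through the martingale problem for the first-order operator $\Lambda_b$. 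Both routes rest on the same two analytic inputs (uniform generator convergence on $\mathcal{C}^1(S)$ and well-posedness of the limiting ODE on the invariant compact $S$), and you correctly flag the uniform Taylor remainder as the real technical content. The paper's route is shorter once the core lemma is available; yours is more self-contained and makes the tightness explicit, but two steps deserve more care than your sketch gives them: coordinatewise tightness does not by itself lift to tightness in $\ell^1$ --- it is the compact containment in $S$ plus the point-separating, additively closed family of finite linear combinations of coordinates that makes Jakubowski's criterion apply; and to conclude that a limit solving the martingale problem for $\Lambda_b$ is the deterministic flow you should note that the carr\'e du champ of a first-order generator vanishes ($\Lambda_b(G^2)-2G\Lambda_b G=0$), so the limiting martingales have zero quadratic variation, after which uniqueness of the Lipschitz ODE finishes the identification.
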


\begin{proof}
Let $b\in E$ be fixed. The set $\mathcal{C}^1(S)$ is a dense linear subspace of $\mathcal{C}(S)$ and under the assumption of smooth $C_{ij}$ and $F_{ij}$ it is  invariant under the limiting semigroup $(U_t)$ defined in (\ref{sem}), since the function $f$ defined in (\ref{effe}) turns out to be in $\mathcal{C}^1(S)$. So by (\cite{G}, proposition 17.9)
 the set $\mathcal{C}^1(S)$ is a core for the generator $\Lambda_b$ defined in (\ref{inff}). 

Then expanding $G$ in Taylor series we have that 
\be
\lim_{h\rightarrow0} \Lambda_{b,h}G = \Lambda_b G
\ee
uniformly for every $G\in\mathcal{C}^1(S)$. Thus the claim follows from (\cite{G}, theorem 17.25) which characterizes the convergence of processes in $D([0,T],S)$.
\end{proof}

\subsection{Controlled systems}

Here we deal with the system of small players under some control, i.e. a strategy given by the major player. We assume that this major player focuses in finite horizon time $n\tau$ and can update her strategy only in discrete times
$$k\tau \qquad k=0,1,\ldots, n-1 .$$
$\tau>0$ and $n\in \mathbb{N}$ are fixed, and in each time step the controller can change the control parameter $b$ regarding what has happened in the time interval. 

The starting point of the Markov chain is $x_0=x(h)\in h\mathbb{Z}^{fin}$, with the control parameter $b_0$. After the first time step the Markov chain is in the state $x_1=X_h(\tau,x_0,b_0)$. Now the major player  can change the parameter, so it becomes $b_1=b_1(x_1)$ that may depend on the current state of the system. She repeats the same procedure at each time step and therefore, in the end, we get what is called a \emph{policy}.

\begin{notation}

A \emph{policy} is a sequence of decision rules 
\be
\pi= (\pi_0,\pi_1,\ldots,\pi_{n-1})
\label{pol}
\ee
that specify the action of the mayor player at each time step, with
\be
\pi_k=\pi_k(x_k)
\label{bk}
\ee
and
\be
x_k=X^h(\tau,x_{k-1},\pi_{k-1}).
\label{xk}
\ee

Let  $X_\pi^h(t,x_0)$ denote the state of the system at time $t$ when the controller applies policy $\pi$, starting from the initial point $x_0$. To shorten the notation we shall sometimes write  $X_\pi^h(t)$ instead of $X_\pi^h(t,x_0)$. It is called the controlled system of small players.
\label{def3}
\end{notation}

 Equation (\ref{xk}) can be also written as $x_k= X_\pi^h(k\tau)$.
At each time step $k\tau$ the controller has an \emph{instantaneous reward} 
$B(x_k, b_k)$ and in the end she has a \emph{final reward} $V_0(x_n)$.
Our goal is to find a strategy that maximizes 
\begin{align}
V_{\pi,n}^h(x(h)) :&=E[\tau B(x_0,\pi_0)+\ldots+\tau B(x_{n-1},\pi_{n-1}) +V_0 (x_n) ]
\label{Vpi}\\
&=E\left[\left.\sum_{k=0}^{n-1} \tau B(X_{\pi}^h(k\tau), \pi(X_{\pi}^h(k\tau))+V_0(X_{\pi}^h(n\tau)) \right|X_{\pi}^h(0)=x_0\right]
\nonumber
\end{align}
where $B$ and $V_0$ are given continuous functions. It is called the \emph{value} for the system with $N$ players. The maximum over all possible policies is then the \emph{optimal value} for the system with $N$ agents
\be
V_n^h(x(h)):= \sup_{\pi}V_{\pi,n}^h(x(h)) .
\label{V}
\ee

We may want to find this optimum value via the usual \emph{dynamic programming} method. First of all we define the \emph{Shapley operator}
\be
S[h] V(x) := \sup_{b\in E} [\tau B(x,b) + E(V(X_h(\tau,x,b)))]
\ee
and then by backward recurrence 
\be
V_k^h = S[h]V_{k-1},
\ee
hence we get
\be
V_n^h =S[h]^n V_0.
\label{bell}
\ee

However this procedure might be unfeasible to calculate practically when the number of players increases. So we will consider the optimum of the limit and then study how close these optima are.

\subsubsection{Controlled limiting system}

We want to study the mean field limit system, given by equation (\ref{effe}), in a classical control theory setup.
Recall that $(E,d)$ is a compact metric space, the one where the parameter $b$ lies. 

\begin{notation}
We define an \emph{action function} to be a piecewise Lipschitz function from finite horizon time to  E
$$\alpha:[0,T]\rightarrow E .$$
\label{action}
\end{notation}

 We note that an action function is different from a policy, because the latter depends on the state of the system at each step, while the former does not.  Thus in this context we rewrite equation (\ref{f}) where $f$ is defined in (\ref{effe}) as
\be
\dot{x} = f(x,\alpha), 
\label{fal}
\ee
meaning $\dot{x}(t)= f(x(t),\alpha(t))$ for every $t\geq 0$, considering hence $b=\alpha(t)$, i.e. the control parameter is a function of the time.

\begin{notation}

$X(t,x,\alpha)$ is the \emph{flux} of the ODE (\ref{fal}), i.e. the solution at time $t$ that is in $x$ at $t=0$ under the control parameter $b=\alpha=\alpha(s)$. In integral form
\be
X(t,x_0,\alpha)= x_0+ \int_0^t f(X(s,x_0, \alpha), \alpha(s))ds .
\label{fint}
\ee
\label{def4}
\end{notation}

We are in a finite horizon time $T$ and now we want to maximize
\be
v_{\alpha}(x):= \int_0^T B(X(s,x,\alpha),\alpha(s))ds +V_0(X(T,x,\alpha))
\label{val}
\ee
where $B$  and $V_0$ are the same as in (\ref{Vpi}). This is the \emph{value} of the limiting system.
The \emph{optimal value} is then
\be
v(x) =\sup_{\alpha}v_{\alpha}(x) .
\label{v}
\ee

Our aim is to study how and under what assumptions we have the convergence of the optimum  of the system of small players (\ref{V}) to this optimum (\ref{v}). In fact we want both $h$ and $\tau$ tend to $0$. To achieve this goal we need further auxiliary systems, to get also the convergence for every policy and every action function.





\subsection{Stability of $S$}

We  show that the state space $S:= B_+(L,R]$ is stable for all the dynamics considered.
We need some regularity for the functions involved in the model.
We require  that all the functions $C_{ij}(x,b)$ and $F_{ij}(x,b)$ are positive and in $\mathcal{C}^2(S)$ in the variable $x$, for any $b$, i.e. twice continuously differentiable on the compact subspace $S \subset \ell^1$, in the topology of $\ell^1$.
Since $S$ is convex we can take the directional derivatives in every direction, so  we have
\begin{eqnarray}
&C:=\sup_{i,j} C_{ij}(x,b)<\infty, \quad &F=\sup_i \sum_{j<i}F_{ij}(x,b)<\infty
\label{CF}
\\
&C(1) :=\sup_{i,j,k} \left|\frac{\partial C_{i,j}}{\partial x_k}(x,b)\right| <\infty,
\quad &F(1) :=\sup_{i,k} \sum_{j<i}\left|\frac{\partial F_{i,j}}{\partial x_k}(x,b)\right| <\infty
\label{CF1}
\\
&C(2):= \sup_{i,j,k,l}\left|\frac{\partial^2 C_{i,j}}{\partial x_k \partial x_l}(x,b)\right|<\infty,
\quad &F(2) := \sup_{i,k,l} \sum_{j<i} \left|\frac{\partial^2 F_{i,j}}{\partial x_k \partial x_l}(x,b)\right|<\infty.
\label{CF2}
\end{eqnarray}
These constants are all finite as $S$ is compact.


Let us recall that $L$ is the identity, i.e. $L(k)=k$. Therefore, using the above equalities in (\ref{effe}) we get that also $f:S\rightarrow \ell^1$ is twice continuously differentiable 
(in $\mathcal{C}^2(S)$) as a map both in $\ell^1$ and in $\ell^1(L)$ with the following bounds
\begin{eqnarray}
||f(x)||_{\ell^1}&\leq& 3C ||x||_{\ell^1}^2 +3 F||x||_{\ell^1}
\label{uno}
\\
||f(x)||_{\ell^1(L)} &\leq& 3(C ||x||_{\ell^1} +3F) ||x||_{\ell^1(L)}
\\
\left\|Df(x)\right\|_{\ell^1} &\leq& 6 C ||x||_{\ell^1} +3 F  +3 [C(1) ||x||_{\ell^1} +F(1)]||x||_{\ell^1}
\label{due}
\\
\left\|Df(x)\right\|_{\ell^1(L)} &\leq& 8C ||x||_{\ell^1(L)} +3F +3 [2C(1) ||x||_{\ell^1} +F(1)]||x||_{\ell^1(L)}
\label{l1L}
\\
||D^2 f(x)||_{\ell^1}&\leq& 6 [C +F(1) +[C(1) +F(2)]||x||_{\ell^1} +C(2) ||x||_{\ell^1}^2]
\label{tre}
\\
||D^2 f(x)||_{\ell^1(L)}&\leq& 9 [C +F(1) +[C(1) +F(2)]||x||_{\ell^1(L)} +C(2) ||x||_{\ell^1(L)}^2].
\end{eqnarray}

We recall now some fact about ODEs in Banach space of measures.
 In the Markovian dynamics of the system of small players every state represents the number of coalitions of different sizes, which is of course positive. Hence we are interested in an evolution $f:S\longrightarrow\ell^1$ for the dynamic (\ref{f}) $\dot{x} =f(x)$
that preserves positivity, i.e. such that for any initial point $x\in \ell^1_+$ the solution $X(t,x)$ belongs to $\ell^1_+$ for any $t\geq0$. We say that $f$ must be \emph{conditionally positive}, in the following sense:

\begin{definition}
A function $f:\ell^1\longrightarrow\ell^1$ is said to be \emph{conditionally positive} if 
for any 
$x\in \ell^1_+$ with $x_k=0$ one has $f_k(x)\geq0$.
\end{definition}

 Further, we need the following definitions.

\begin{definition}
A function $f:\ell^1_+\longrightarrow\ell^1$ is called \emph{$L$-subcritical} if  
\be
\sum_k L(k) f_k(x) \leq 0.
\ee
\end{definition}
As a motivation, we observe that $\frac{d}{dt} ||x||_{\ell^1(L)} \leq 0$ if $f$ is $L$-subcritical and $\dot{x}=f(x)$. 

\begin{definition}
A function $f:\ell^1_+\longrightarrow\ell^1$ is said to satisfy the \emph{Lyapunov condition}  if 
\be
\sum_k L(k) f_k(x) \leq a \sum_k L(k) x_k +b
\ee
for some constant $a$ and $b$, for all $x\in\ell^1_+$.
\end{definition}

The main result concerning the dynamics in $\ell^1$ is the following lemma. 

\begin{lemma}
Assume that the function $f$ is conditionally positive, satisfies the Lyapunov condition
 and is Lypschitz continuous in the norm of $\ell^1(L)$ on any bounded set of $\ell^1(L)_+$. Then for any $x\in\ell^1(L)_+$ the Cauchy problem (\ref{f}) has a unique global (defined for all times) solution $X(t,x)$ in $\ell^1_+(L)$. Moreover
\be
X(t,x) \in B_+(L, e^{at}(||x||_{\ell^1(L)} +bt)).
\ee
In particular if $f$ is $L$-subcritical then any  $B_+(L,R)$ is invariant.
\label{lem}
\end{lemma}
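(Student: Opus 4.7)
The plan is to combine three classical ingredients: Picard--Lindel\"of in the Banach space $\ell^1(L)$, an invariance argument for $\ell^1_+$ driven by conditional positivity, and a Gronwall estimate driven by the Lyapunov condition.

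First, since $f$ is Lipschitz continuous in the $\ell^1(L)$-norm on every bounded subset of $\ell^1_+(L)$, the classical Picard--Lindel\"of theorem in a Banach space produces, for each $x \in \ell^1_+(L)$, a unique maximal solution $X(\cdot,x)$ of $\dot{x}=f(x)$ on some interval $[0,t_{\max}(x))$, with the standard blow-up dichotomy that either $t_{\max}(x)=\infty$ or $||X(t,x)||_{\ell^1(L)}\rightarrow\infty$ as $t\rightarrow t_{\max}(x)^-$. Next, I would establish forward invariance of $\ell^1_+$ using conditional positivity as a Nagumo-type tangency condition. The cleanest route is by perturbation: fix a strictly positive weight $w\in\ell^1(L)$ (for instance $w_k=2^{-k}/k$) and, for $\varepsilon>0$, solve the perturbed equation $\dot y=f(y)+\varepsilon w$ with the same initial datum, obtaining a local solution $Y^\varepsilon$. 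Conditional positivity together with continuity of $f$ ensures that whenever some $Y^\varepsilon_k(t)$ approaches $0$ its derivative is bounded below by a quantity close to $\varepsilon w_k>0$, so a direct first-exit-time argument rules out any coordinate ever reaching $0$ and keeps $Y^\varepsilon(t)$ strictly in $\ell^1_+$. Letting $\varepsilon\downarrow 0$ and invoking continuous dependence of Picard--Lindel\"of solutions on the vector field (on short time intervals where the relevant $\ell^1(L)$-norms remain uniformly bounded) transfers nonnegativity to the unperturbed flow $X(\cdot,x)$.

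Once positivity is in hand, differentiating the norm along the trajectory and using the Lyapunov assumption gives
\begin{equation}
\frac{d}{dt}||X(t,x)||_{\ell^1(L)} = \sum_k L(k)\, f_k(X(t,x)) \leq a\,||X(t,x)||_{\ell^1(L)} + b,
\end{equation}
and Gronwall's inequality yields $||X(t,x)||_{\ell^1(L)} \leq e^{at}||x||_{\ell^1(L)} + (b/a)(e^{at}-1)$, which is bounded by $e^{at}(||x||_{\ell^1(L)}+bt)$ via the elementary inequality $(e^{at}-1)/a\leq t\,e^{at}$ valid for $a\geq 0$. This a priori bound rules out blow-up in finite time, so the above dichotomy forces $t_{\max}(x)=\infty$ and delivers the stated inclusion $X(t,x)\in B_+(L,\,e^{at}(||x||_{\ell^1(L)}+bt))$. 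In the $L$-subcritical case the same computation yields a non-positive derivative of the norm, so $||X(t,x)||_{\ell^1(L)}$ is non-increasing and each $B_+(L,R)$ is forward invariant.

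I expect the main obstacle to be the rigorous justification of positivity in infinite dimensions: the naive ``first time a coordinate hits zero'' argument fails to give a strict contradiction from conditional positivity alone, since one only obtains $\dot X_k(t_0)\geq 0$ at the hitting instant, not strict inequality. The perturbation device circumvents this, but requires a careful continuity-in-parameter argument, which I would set up by first fixing a short time interval on which Picard iteration converges uniformly in $\varepsilon$ (using that $f$ is Lipschitz on a bounded set and that $\varepsilon w$ is a small additive perturbation of the vector field), then patching using the uniform Gronwall bound to guarantee that all $Y^\varepsilon$ and $X$ remain in a common $\ell^1(L)$-bounded region on which $f$ is Lipschitz.
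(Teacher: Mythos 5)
Your proposal is correct and follows essentially the same three-step route as the paper's own proof: local well-posedness from Lipschitz continuity in $\ell^1(L)$, preservation of positivity from conditional positivity, and then the Lyapunov condition plus Gronwall's lemma to obtain the bound $0\leq (L,X(t,x))\leq e^{at}[(L,x)+bt]$ and hence global existence. The only difference is that you carefully justify the positivity-preservation step via an $\varepsilon$-perturbation of the vector field, whereas the paper asserts it in a single sentence; this added detail is sound and strengthens rather than changes the argument.
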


\begin{proof}

By local Lipschitz continuity and conditional positivity, evolution (\ref{f}) is locally well-posed and preserves positivity. Moreover, by the Lyapunov condition,
$$(L,X(t,x)) \leq (L,x) + \int_0^t [a(L, X(s,x)) +b]ds $$
where $(L,x) = \sum_k L(k) x_k$ is the duality between functions and measures.
So by Gronwall's lemma and the preservation of positivity
$$0\leq (L,X(t,x)) \leq e^{at} [(L,x)+ bt] ,$$
implying that the solution can be extended to all times with required bounds.

\end{proof}

We have found  that, if the assumptions of the lemma are satisfied,   the set  $B_+(L,x_0]$  is invariant under an $L$-subcritical evolution $f$ and the ODE has a unique global solution, starting from $x_0$.

 Let us check that the assumptions of  lemma \ref{lem} are satisfied for $f$ defined in (\ref{effe}).
The function $f$  is conditionally positive because its domain is $S$, which is a subset of $\ell^1_+$,  and the functions $C_{ij}(x,b)$ and $F_{ij}(x,b)$ are positive. 
Further, if we consider the function $G\in \mathcal{C}^1 (S)$ defined by  $G(x) = ||x||_{\ell^1(L)} = \sum_k k x_k $ and apply the generator (\ref{inff})  
to this function then we have $\sum_k k f_k(x) = 0$,
since the derivatives of $G$ are $\frac{\partial G}{\partial x_k}(x) = k$. This implies that $f$ is $L$-subcritical.

Considering  equation (\ref{l1L}) and thanks to the boundedness of $S$ in $\ell^1(L)$  
we obtain that $f$ is Lipschitz continuous as a map in $\ell^1(L)$. So all the assumptions of lemma \ref{lem} are satisfied, showing the well posedness of the problem and the invariance of $S$.
We would like this set to be invariant also for the system of small players.

\subsubsection{State space for the small players}

For any $h$, $X^h(t, x)$ is a  continuous-time Markov chains  on $h \N^{fin} \cap \ell^1_+$.
 Let us say
that  $X^h(t, x)$ are \emph{L-non-increasing}, if any jump of $X^h(t, x)$ cannot increase $L$. In this case a trajectory $X^h(t, x)$ stays forever in $B_+(L,R)$ whenever the initial point
$x\in B_+(L,R)$. Moreover $X^h$ is \emph{L-subcritical} in the sense that its generator $\Lambda_{h,b}$
satisfies the inequality $\Lambda_{h,b}(L)\leq0$.

The Markov chains $X^h(t, x(h))$ are  $L$-non increasing and have bounded generators. In fact the state space of the coalitions of small players is actually finite for any fixed $h$. Indeed we recall that if $X^h(t, x)$ is in the state $x$ then $x_k$ is $h$ times the number of coalitions of size $k$. The total number of small players is fixed $N=N(h)$ for any $h$, so $x_k=0$ for any $k\geq N$ and $x_k\leq N/h$ for any $k\leq N$, meaning that the state space is finite.

The total number of small players $N$ is of course constant. So the norm in $\ell^1(L)$ of the states $x$ of the Markov chain $X^h(t)$ is conserved:
\be
||x||_{\ell^1(L)} =  \sum_{k=1}^N k x_k = h\sum_{k=1}^N k n_k = h N(h).
\label{xNh}
\ee
Hence if the initial point $x(h)$ is in $S = B_+(L,R]$ then any state is in $S$, i.e. the set $B_+(L,R]$ is invariant for the dynamics of $X^h$.

\begin{notation}
$S(h)$ is the finite state space of the Markov chain $X^h$, the system of $N=N(h)$ small players. It is a subset of the compact $B_+(L,R]$ in $\ell^1$ and a subset of $\R^N$ and  of the  set $h \mathbb{N}^{fin}$.
Denote by $M(h)$ the number of elements of $S(h)$
$$S(h):=  h \mathbb{N}^{fin} \cap B_+(L,R] .$$
\label{S(h)}
\end{notation}

 So we can  define $S:= B_+(L,R]$
for a suitable $R$ such that this set contains all the initial data $x(h)$ and $x_0$. Such an $R$  exists because we will consider $\h x(h) = x_0$ and then the sequence is bounded. $S$ is  the compact set in $\ell^1$ invariant for all the dynamics considered.
Thanks to (\ref{xNh}) we have  
\be
N(h) \leq \frac{R}{h}
\label{NhR}
\ee
for any $h$.
Further
$\h M(h) = \h N(h) = +\infty $
and the finite spaces $S(h)$ are decreasing, i.e.  if $h>l$ then
$S(h)\subset S(l),$
and
$ \bigcup_{h>0} S(h) \subseteq S .$
Moreover
\be
S(h)  \subset S \subset \ell^1(L) \subset\ell^1 \subset \ell^2.
\label{inclusion}
\ee




\subsection{Assumptions in the model}

Let us summarize the assumptions we make on our model. Recall that in the system of $N$ small players the controller acts at time steps $k\tau$ for $k=0,1,\ldots,n-1$. 
\begin{itemize}
	\item \textbf{(H1)} $S= B_+(L,R]$ is the  state space of all the dynamics considered, and the initial states lie in $S$;
	\item \textbf{(H2)} The functions $C_{ij}(x,b)$ and $F_{ij}(x,b)$ are positive and in $\mathcal{C}^2(S)$ in the variable $x$, for any $b\in E$, and Lipschitz continuous in the variable $b$, for any $x\in S$;
	\item \textbf{(H3)} The rewards $B(x,b)$ and $V_0(x)$ are Lipschitz continuous in $x$ in the $\ell^2$-norm, uniformly in $b$, and $B$ is bounded;
	\item \textbf{(H4)} The time step $\tau= \tau(h)$ depends on $h$, as well as  $N=N(h)$ does, and
	$$\h \tau(h)= \h \tau(h) \sqrt{N(h)} = 0 ;$$
	\item \textbf{(H5)} The horizon $T$ is fixed and the number of steps is  
	\be
	n(h):= \left\lfloor \frac{T}{\tau(h)}\right\rfloor
	\label{T}
	\ee
	for any $h$, which tends to infinity, as $h$ tends to 0;
	\item \textbf{(H6)} The rescaling parameter $h$ of the model is such that
\be
\h h (N(h))^2 =\h \tau(h) (N(h))^2=0.
\label{assump}
\ee
\end{itemize}
Equivalently, we can think of studying the limit as $N$ tends to infinity. So the parameter $h=h(N)$ has to satisfy the latter conditions and it represents the rescaling parameter for the system of $N$ players.


\section{Mean field convergence}

In this section we present our main results. We follow the ideas in \cite{M}, hence we firstly introduce the two auxiliary systems.

\subsubsection{First auxiliary system}

This is a system with $N$ agents controlled by an action function borrowed  from the mean field limit. More precisely, let $\alpha$ be an action function that specifies the action to be taken at time $t$. Although $\alpha$ has been defined for the limiting system, it can also be used for the system with $N$ players. In this case, the action function $\alpha$ can be seen as a policy that does not depend on the state of the system.

At step $k$, the controller applies action 
$$\alpha_k:=\alpha(k  \tau),$$
so (\ref{bk}) gives a policy $(\alpha_0,\ldots,\alpha_{n-1})$ as in (\ref{pol}), but independent of the state of the system.

By abuse of notation, we denote by $X_\alpha^h(t)$ as in (\ref{def3}) the state of the system at time $t$ when applying the policy derived from the action function $\alpha$ as explained above. In what follows policies will always be denoted by $\pi$ and action functions by $\alpha$. Here (\ref{xk}) becomes
$$x_k=X^h(\tau, x_{k-1}, \alpha_{k-1})= X_\alpha^h(k \tau),$$ starting from initial point $x_0$ with control parameter $\alpha_0 = \alpha(0)$.

The value for this system, similarly to (\ref{Vpi}), is defined by
\begin{align}
V_{\alpha, n}^h (x_0):&= E[\tau B(x_0, \alpha_0) +\ldots +\tau B(x_{n-1},\alpha_{n-1}) + V_0(x_n)] 
\\
&=
E\left[\left.\sum_{k=0}^{n-1} \tau B(X_{\alpha}^h(k \tau), \alpha(k \tau)) + V_0(X_\alpha^h(n\tau)) \right| X_\alpha^h(0) = x_0\right].
\nonumber
\end{align}

\subsubsection{Second auxiliary system}

The method of proof uses a second auxiliary system in which trajectories are considered. This is a limiting system controlled by an action function derived from the policy of the original system with $N$ agents.

Consider the system with $N$ players under policy $\pi$. The stochastic process $X_\pi^h=X_{\pi,n(h)}^h$ is  defined on some probability space $\Omega$. 
To every $\omega\in\Omega$ there corresponds a trajectory $X_\pi^h(\omega)$, and for every $\omega\in\Omega$ we define  the piecewise constant action function $A_\pi^h(\omega)$, as explained in the following

\begin{notation}
$$A_\pi^h(\omega): [0,T]\rightarrow E$$
is an action function such that
\begin{itemize}
	\item this random function is constant on each interval $[k \tau, (k+1) \tau[$ for any $k=0,1,\ldots,n-1$;
	\item $A_\pi^h(\omega)(k \tau):=\pi_k = \pi_k(X_\pi^h(k\tau))$ is the action taken by the major player of the system with $N$ agents at time slot $k\tau$, under policy $\pi$.
\end{itemize}
\label{def9}
\end{notation}

Recall from notation \ref{def4} that for any $x_0\in S\subset\ell^1(\mathbb{N})$ and any action function $\alpha$, $X(x_0,\alpha)$ is the solution of the ODE (\ref{fal}). For every $\omega$, $X(t, x_0, A_\pi^h(\omega))$ is the solution of the limiting system with action function $A_\pi^h(\omega)$, as in (\ref{fint}), i.e.
\be
X(t,x_0,A_\pi^h(\omega))= x_0+ \int_0^t f(X(s,x_0, A_\pi^h(\omega)), A_\pi^h(\omega)(s))ds .
\label{fluxalphapi}
\ee
The value function for this system is as in (\ref{val}).

 When $\omega$ is fixed, $X(t,x_0,A_\pi^h(\omega))$ is a continuous time deterministic process corresponding to one trajectory $X_\pi^h(\omega)$. When considering all possible realizations of $X_\pi^h$, $X(t,x_0,A_\pi^h)$ is a random, continuous time function \emph{coupled} to $X_\pi^h$, i.e. a stochastic process. Its randomness comes only from the action term $A_\pi^h$, in the ODE (\ref{fal}). In the following  we omit the dependence on $\omega$ in our writing. $A_\pi^h$ and $X_\pi^h$ will always designate the processes corresponding to the same $\omega$.


\subsection{Main results}

The main result establishes the convergence of the optimization problem for the system with $N$ players to the optimization problem for the mean field limit, through their value functions. 

\begin{theorem}
Under assumptions (H1)-(H6), if $\lim_{h\rightarrow0} x(h) =x_0$ almost surely, respectively in probability, then
\be
\h V^h(x(h))= v(x_0)
\label{limv}
\ee
almost surely, respectively in probability, where $V^h$ and $v$ are the optimal values defined in (\ref{V}) and (\ref{v}).
\label{teo2}
\end{theorem}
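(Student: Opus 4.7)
The plan is to establish the two inequalities $\liminf_{h \to 0} V^h(x(h)) \geq v(x_0)$ and $\limsup_{h \to 0} V^h(x(h)) \leq v(x_0)$ by deploying each of the two auxiliary systems, fixing $\epsilon > 0$ throughout.

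For the lower bound, pick a near-optimal action function $\alpha$ with $v_\alpha(x_0) \geq v(x_0) - \epsilon$ and treat it as a state-independent policy for the $N$-player system (the first auxiliary system), so that $V^h(x(h)) \geq V_{\alpha, n(h)}^h(x(h))$. Since $\alpha$ is piecewise Lipschitz, Proposition 1 applied on each constancy interval (combined with continuity of the limit flow $X$ in the initial datum on the compact $S$) yields $\sup_{t \in [0, T]}\|X_\alpha^h(t) - X(t, x_0, \alpha)\|_{\ell^1} \to 0$ in probability. Using the $\ell^2$-Lipschitz continuity of $B$ and $V_0$ from (H3) and the inclusion $\ell^1 \subset \ell^2$, together with (H4)-(H5) to replace the Riemann sum by the integral $\int_0^T B(X(s,x_0,\alpha),\alpha(s))\,ds$, one obtains $\lim_{h\to 0} V^h_{\alpha, n(h)}(x(h)) = v_\alpha(x_0) \geq v(x_0) - \epsilon$, and letting $\epsilon \to 0$ completes the lower bound.

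For the upper bound, for every $h$ choose a near-optimal policy $\pi^h$ with $V^h(x(h)) \leq V^h_{\pi^h, n(h)}(x(h)) + \epsilon$. Construct $X_{\pi^h}^h$ and the action function $A_{\pi^h}^h$ on a common probability space, and let $X(\cdot, x_0, A_{\pi^h}^h)$ be the flow of the second auxiliary system driven by the random, adapted action $A_{\pi^h}^h$. The pathwise inequality $v_{A_{\pi^h}^h(\omega)}(x_0) \leq v(x_0)$ holds for every $\omega$, so it suffices to establish the \emph{key coupling estimate}
\[
\sup_{t \in [0, T]}\bigl\|X_{\pi^h}^h(t) - X(t, x_0, A_{\pi^h}^h)\bigr\|_{\ell^2} \longrightarrow 0
\]
in probability as $h \to 0$, uniformly over $\pi^h$. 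Writing this difference as a Dynkin martingale $M^h(t)$ plus the drift term $\int_0^t [f(X_{\pi^h}^h(s), A_{\pi^h}^h(s)) - f(X(s, x_0, A_{\pi^h}^h), A_{\pi^h}^h(s))]\,ds$, Gronwall's inequality with the $\|Df\|_{\ell^1(L)}$-bound (\ref{l1L}) (uniform in $b$ by (H2)) reduces the matter to controlling $M^h$. Doob's $L^2$-inequality gives $E\sup_{t \leq T}\|M^h(t)\|_{\ell^2}^2 \leq C\,h(N(h))^2$, since the total jump rate is $O(1/h)$ and each jump has $\ell^2$-size $O(h)$; by (H6) this vanishes. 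The discrete-to-continuous comparison between $\sum_k \tau B(X_{\pi^h}^h(k\tau),\pi_k^h)$ and $\int_0^T B(X(s,x_0,A_{\pi^h}^h),A_{\pi^h}^h(s))\,ds$ contributes an additional error of order $\tau(h)(N(h))^2 + \tau(h)$, again vanishing by (H4) and (H6), after which (H3) converts trajectory closeness into reward closeness.

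The main obstacle is the uniformity of the coupling estimate over adversarially chosen $\pi^h$: since $\pi^h$ is adapted to the chain, $A_{\pi^h}^h$ is random and measurable with respect to the chain's filtration, so the coupling must be Markovian, driving $X_{\pi^h}^h$ and $X(\cdot, x_0, A_{\pi^h}^h)$ by common sources of randomness with the deterministic flow simply reading off $A_{\pi^h}^h$, so that $M^h$ is a genuine martingale with respect to a common filtration. Once this is set up, the $\mathcal{C}^2(S)$-regularity of $f$ from (H2) combined with (\ref{due})--(\ref{tre}) and the scaling assumptions (H4) and (H6) deliver the required vanishing of both the stochastic fluctuations and the time discretization, and the two inequalities combine to give (\ref{limv}), with convergence in probability (or almost sure, depending on the mode of convergence of $x(h)$) following from the hypothesis on the initial data.
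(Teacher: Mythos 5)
Your two-sided architecture --- lower bound via the first auxiliary system (a near-optimal action function used as a state-independent policy), upper bound via the second auxiliary system (near-optimal policies $\pi^h$, the pathwise inequality $v_{A_{\pi^h}^h(\omega)}(x_0)\leq v(x_0)$, and a coupling estimate between the chain and the ODE driven by the random action $A_{\pi^h}^h$) --- is exactly the architecture the paper adopts from \cite{M}. The difference in route is that the paper does not prove the trajectory-comparison estimates directly: it reduces Theorems 1--4 to the verification of conditions (A1)--(A3) (bounds on the number of transitions per step, convergence of the drift $F^h/\tau$ to $f$ via a second-order Taylor expansion of the semigroup, and Lipschitz continuity of the drift obtained through the marching-soldiers Markovian coupling) and then invokes Theorems 2--6 of \cite{M}. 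Your martingale-plus-Gronwall sketch for the upper bound is in substance what those cited theorems encapsulate, and your identification of the adaptedness of $A_{\pi^h}^h$ as the delicate point is accurate.

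There is, however, a genuine gap in your lower bound. You justify $\sup_{t\in[0,T]}\|X_\alpha^h(t)-X(t,x_0,\alpha)\|\to 0$ by ``Proposition 1 applied on each constancy interval.'' This does not work: in the first auxiliary system the control is constant only on intervals $[k\tau,(k+1)\tau)$ of length $\tau(h)\to 0$, so the number of such intervals is $n(h)=\lfloor T/\tau(h)\rfloor\to\infty$, and Proposition 1 is a purely qualitative convergence-in-law statement for a single fixed $b$; chaining an unbounded number of qualitative statements gives no control on the accumulated error. What is needed here is precisely the quantitative estimate of the paper's Theorem \ref{teo3} (with the explicit rates $I_0'(h,\alpha)$ and $J(h,T)$), i.e.\ the same martingale--Gronwall--Doob machinery you correctly deploy for the upper bound, together with the extra term accounting for the discretization of the Lipschitz action function (the $K_\alpha$ and $\|\alpha\|_\infty$ contributions in $I_0'$). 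If you replace the appeal to Proposition 1 by that uniform estimate, the lower bound closes and the proof is complete.
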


The second result states that an optimal action function for the mean field limit provides an asymptotically optimal strategy for the system with $N$ agents.
Let us denote by
  $(\hat{X}^h_\alpha(t))_{t\geq0}$ the continuous time process which is the  affine interpolation of $X^h_\alpha(t)$ (the first auxiliary system) in the points $k\tau$ and 
similarly by $\hat{X}^h_\pi(t)$  the  affine interpolation of $X^h_\pi(t)$ under policy $\pi$. 

\begin{theorem}
Under assumptions (H1)-(H6), let $\alpha$ be a piecewise Lipschitz continuous action function on $[0,T]$, of Lipschitz constant $K_\alpha$, and with at most $p$ discontinuity points. 
Then there exist functions $J$, $I_0'$ and $B'$ satisfying 
$$
\h I_0'(h,\alpha) = \h J(h,T) =0,\quad \lim_{\substack{h\rightarrow0 \\ \delta\rightarrow0}}B'(h,\delta)=0
$$
such that
for all $\epsilon>0$ 
\be
P\left\{\sup_{0<t<T}\left\|\hat{X}^h_\alpha(t)-X(t,x_0, \alpha)\right\|
>\left[\left\|X^h(0)-x_0\right\|+I_0'(h,\alpha)T +\epsilon\right]e^{L_1 T}\right\}
\leq\frac{J(h,T)}{\epsilon^2}
\label{T3}
\ee
and
\be
\left|V_\alpha^h(X^h(0))- v_\alpha(x_0)\right|\leq B'\left(h,\left\|X^h(0)-x_0\right\|\right).
\label{68}
\ee

\label{teo3}
\end{theorem}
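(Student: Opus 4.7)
My plan is to compare the controlled Markov chain $X^h_\alpha$ with the mean-field trajectory $X(\cdot,x_0,\alpha)$ via a Dynkin decomposition plus Gronwall's lemma, and then translate the trajectory bound into the value bound (\ref{68}) using the Lipschitz continuity in H3. A direct calculation on the generator (\ref{gen}) applied to the linear coordinate functional $\phi_i(x)=x_i$ shows, after cancellation of the $1/h$ factor against the $h$-sized increments, that $\Lambda_{b,h}\phi_i(x)=f_i(x,b)$ with $f_i$ as in (\ref{effe}). Consequently, by Dynkin's formula the $\ell^1$-valued process
\[
M^h(t):=X^h_\alpha(t)-X^h_\alpha(0)-\int_0^t f\bigl(X^h_\alpha(s),\bar{\alpha}(s)\bigr)\,ds
\]
is a (componentwise) martingale, where $\bar{\alpha}(s)=\alpha(\lfloor s/\tau\rfloor\tau)$ is the piecewise-constant control actually applied in the first auxiliary system.

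\textbf{Gronwall and control discretization.} Subtracting the integral form (\ref{fint}) of the limit ODE from the above, I obtain
\[
X^h_\alpha(t)-X(t,x_0,\alpha)=\bigl(X^h_\alpha(0)-x_0\bigr)+M^h(t)+\int_0^t\bigl[f(X^h_\alpha,\bar{\alpha})-f(X,\alpha)\bigr]\,ds .
\]
Splitting the integrand as $[f(X^h_\alpha,\bar\alpha)-f(X^h_\alpha,\alpha)]+[f(X^h_\alpha,\alpha)-f(X,\alpha)]$, the second piece is bounded by $L_1\|X^h_\alpha(s)-X(s,x_0,\alpha)\|$, where the Lipschitz constant $L_1$ of $f$ in $x$ on the compact $S$ is finite thanks to (\ref{due})–(\ref{tre}). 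The first piece is the control-discretization error: by H2, on each sub-interval on which $\alpha$ is Lipschitz it is at most $L_b K_\alpha\tau$, while the at most $p$ sub-intervals of length $\tau$ straddling a discontinuity contribute an additive $O(p\tau/T)$ per unit time. Gronwall's lemma then yields
\[
\sup_{t\le T}\|X^h_\alpha(t)-X(t,x_0,\alpha)\|\le e^{L_1 T}\bigl[\|X^h_\alpha(0)-x_0\|+I_0'(h,\alpha)T+\sup_{t\le T}\|M^h(t)\|\bigr],
\]
with $I_0'(h,\alpha)=L_b K_\alpha\tau(h)+Cp\tau(h)/T\to0$ by H4.

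\textbf{Martingale fluctuation and affine interpolation.} To control $P(\sup_{t\le T}\|M^h(t)\|>\epsilon)$, I note that by (\ref{sum}) and $\|x\|_{\ell^1}\le R$ the total jump rate is $s(x,b)=O(N(h)^2 h\cdot h^{-1})=O(N(h)^2)$ with each jump changing three components by $\pm h$. Hence the predictable quadratic variation of each component of $M^h$ at time $T$ is $O(h^2\cdot N(h)^2 T)$; summing over the at most $N(h)$ nonzero components with Cauchy–Schwarz gives $E\sup_{t\le T}\|M^h(t)\|_{\ell^1}^2=O(h(N(h))^2 T)$, and Doob's $L^2$ inequality delivers (\ref{T3}) with $J(h,T)=C'h(N(h))^2 T$, which vanishes by H6. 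The passage from $X^h_\alpha$ to the affine interpolant $\hat{X}^h_\alpha$ introduces at most the size of the displacement over one $\tau$-step, again bounded by the same martingale estimate plus an $O(\tau)$ drift term, which I absorb into $I_0'$ at the cost of enlarging constants. The main technical obstacle is coordinating these three independent errors — initial-data gap, control discretization of size $\tau(h)$, and martingale fluctuation of size $\sqrt{h}N(h)$ — so that all vanish under the joint regime H4–H6; the norm used inside the sup must be chosen (I take $\ell^1$, which also dominates $\ell^2$) so that the Lipschitz hypotheses in H3 are applicable.

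\textbf{From trajectories to values.} For (\ref{68}) I would decompose
\[
\bigl|V^h_\alpha(X^h(0))-v_\alpha(x_0)\bigr|\le E\Bigl|\sum_{k=0}^{n-1}\tau B(X^h_\alpha(k\tau),\alpha(k\tau))-\int_0^T B(X(s,x_0,\alpha),\alpha(s))\,ds\Bigr|+E\bigl|V_0(X^h_\alpha(n\tau))-V_0(X(T,x_0,\alpha))\bigr|.
\]
Using H3 (boundedness and $\ell^2$-Lipschitzness of $B$, Lipschitzness of $V_0$) together with the standard Riemann-sum-vs-integral error of size $O(\tau)$ for the Lipschitz integrand $s\mapsto B(X(s,x_0,\alpha),\alpha(s))$, each term is dominated by a constant times $E\sup_{t\le T}\|\hat{X}^h_\alpha(t)-X(t,x_0,\alpha)\|$ plus terms that vanish with $h$. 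Splitting this expectation using the event in (\ref{T3}) with $\epsilon=\epsilon(h)\to0$ chosen so that $J(h,T)/\epsilon(h)^2\to0$ (possible by H6), and estimating the complementary event by the uniform bound $\|V_0\|_\infty+T\|B\|_\infty<\infty$ on the compact $S$, produces a function $B'(h,\delta)$ that tends to $0$ as $h,\delta\to0$ and satisfies (\ref{68}). \qed
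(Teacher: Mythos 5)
Your argument is sound in outline, but it takes a genuinely different route from the paper. The paper does not prove Theorem \ref{teo3} directly: it verifies conditions (A1)--(A3) of the discrete-time framework of Gast, Gaujal and Le Boudec \cite{M} --- the transition-count bounds (\ref{delta1})--(\ref{delta2}) by Poisson domination, the one-step drift approximation (\ref{I}) by a second-order Taylor expansion of the semigroup $U^h_t$, and the Lipschitz continuity (\ref{lipF}) of the one-step drift $F^h$ by the coupling of marching soldiers --- and then invokes Theorems 2--6 of \cite{M} as a black box. You instead give a direct continuous-time proof: the exact generator identity $\Lambda_{b,h}G_k=f_k$ (which the paper also establishes, at (\ref{landaf}), but only in order to compute the drift) makes $X^h_\alpha(t)-x_0-\int_0^t f(X^h_\alpha(s),\bar\alpha(s))\,ds$ a martingale with no discretization error in the dynamics, so Gronwall plus Doob and Chebyshev yield (\ref{T3}) with the term $I_0(h)$ absent from your $I_0'$ and with the Lipschitz constant $K$ of $f$ in place of $L_1(h)=Ke^{M_2\sqrt{N(h)}\tau}$ in the exponential (both harmless, since the theorem only asserts existence of such functions). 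What your route buys is a shorter, self-contained argument that bypasses the entire coupling construction and the drift-convergence estimate; what the paper's route buys is that the same three verifications simultaneously deliver Theorems \ref{teo2}, \ref{teo5} and \ref{teo6}, including the case of state-dependent policies, where your open-loop martingale decomposition would have to be reworked, and it inherits the explicit constants $J$, $B$, $B'$ from \cite{M} without re-derivation. Two small points to tighten: the total jump rate is bounded by $s^h=h^{-1}(CR^2+F)$ as in (\ref{sh}) rather than $O(N(h)^2)$ --- it is this bound, combined with jump size $O(h)$, that makes your quadratic-variation estimate $E\sup_{t\le T}\|M^h(t)\|_{\ell^1}^2=O(hN(h)^2T)$ come out consistently --- and the passage from $X^h_\alpha$ to the affine interpolant $\hat X^h_\alpha$, which you mention only in passing, should be written out since the stated supremum is over the interpolated process.
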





Inequality (\ref{68}) implies also that if 
\be
\h X^h(0) = x_0
\label{xh}
\ee
 almost surely, respectively in probability, then
\be
\h V_\alpha^h(X^h(0)) = v_\alpha (x_0)
\label{valpha}
\ee
almost surely, respectively in probability.

The following corollary combines Theorems \ref{teo2} and \ref{teo3}. It states that an optimal action function for the limiting system is asymptotically optimal for the system of small players.

\begin{corollary}
If $\alpha_\ast$ is an optimal action function for the limiting system and if $ \h X^h(0) = x_0$ almost surely, respectively in probability, then
\be
\h \left|V_{\alpha_\ast}^h(X^h(0)) - V^h(X^h(0))\right|=0
\label{Vhalpha}
\ee
almost surely, respectively in probability.
\label{cor4}
\end{corollary}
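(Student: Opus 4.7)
The plan is to derive the corollary directly from Theorems \ref{teo2} and \ref{teo3} by a triangle-inequality argument, once the convergence modes are reconciled.

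First I would note that, since $X^h(0) = x(h)$ and $\lim_{h\to 0} x(h) = x_0$ in the chosen mode (a.s.\ or in probability), the hypothesis of Theorem \ref{teo2} is satisfied, giving
\[
\lim_{h\to 0} V^h(X^h(0)) \;=\; v(x_0)
\]
in the same mode. Next I would apply Theorem \ref{teo3} to the optimal action function $\alpha_\ast$: inequality (\ref{68}) reads
\[
\bigl|V^h_{\alpha_\ast}(X^h(0)) - v_{\alpha_\ast}(x_0)\bigr| \;\leq\; B'\bigl(h, \|X^h(0)-x_0\|\bigr),
\]
and since $B'(h,\delta)\to 0$ as $(h,\delta)\to(0,0)$, the right-hand side tends to $0$ (a.s.\ or in probability, depending on the mode of $\|X^h(0)-x_0\|\to 0$), yielding
\[
\lim_{h\to 0} V^h_{\alpha_\ast}(X^h(0)) \;=\; v_{\alpha_\ast}(x_0).
\]

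Since $\alpha_\ast$ is optimal for the limiting system, by definition \eqref{v} we have $v_{\alpha_\ast}(x_0) = v(x_0)$. Combining the two displays above and using the triangle inequality,
\[
\bigl|V^h_{\alpha_\ast}(X^h(0)) - V^h(X^h(0))\bigr|
\;\leq\;
\bigl|V^h_{\alpha_\ast}(X^h(0)) - v(x_0)\bigr|
+ \bigl|V^h(X^h(0)) - v(x_0)\bigr|,
\]
and both terms on the right tend to $0$ in the appropriate mode, which is the claim \eqref{Vhalpha}.

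The only subtlety — and the one place where I would be careful — is keeping track of the convergence modes across the two inputs: Theorem \ref{teo2} transfers a.s.\ (resp.\ in probability) convergence of initial data to a.s.\ (resp.\ in probability) convergence of the value, while Theorem \ref{teo3} does the same via \eqref{68} because $B'(h,\cdot)$ is a deterministic function of the random quantity $\|X^h(0)-x_0\|$. In the a.s.\ case both limits hold on a common full-measure set; in the probability case one uses the standard fact that convergence in probability is preserved under continuous deterministic functions and finite sums, so the triangle inequality still yields the desired convergence. No additional estimate is needed beyond what Theorems \ref{teo2} and \ref{teo3} already supply.
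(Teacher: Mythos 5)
Your proposal is correct and follows essentially the same route as the paper: the identical triangle-inequality decomposition around $v(x_0)=v_{\alpha_\ast}(x_0)$, with the first term controlled by Theorem \ref{teo3} via (\ref{68})/(\ref{valpha}) and the second by Theorem \ref{teo2}. Your additional remarks on reconciling the a.s.\ and in-probability modes are a sensible elaboration of what the paper leaves implicit, but they do not change the argument.
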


\begin{proof}
The assumption says that there exists an action function $\alpha_\ast$ that maximizes $v$, i.e.
$v(x_0)=v_{\alpha_\ast}(x_0) =\max_{\alpha} v_\alpha(x_0) .$
Hence we have
$$\left|V_{\alpha_\ast}^h(X^h(0)) - V^h(X^h(0))\right| \leq \left|V_{\alpha_\ast}^h(X^h(0))- v_{\alpha^\ast}(x_0)\right| + \left|V^h(X^h(0))- v(x_0)\right|.$$
So, if (\ref{xh}) holds,  the first modulus goes to 0 by (\ref{valpha}) and the second by (\ref{limv}). Therefore (\ref{Vhalpha}) is given combining Theorems 1 and 2.
\end{proof}


\subsubsection{Auxiliary results}

In order to prove the main theorems we need two auxiliary results. 

\begin{theorem}
Under assumptions  (H1)-(H6), there exist functions $I_0$ and $J$ satisfying $\h I_0(h,\alpha) = \h J(h,T) =0$
such that
for any $\epsilon>0$, $h>0$ and any policy $\pi$
\be
P\left\{\sup_{0<t<T}\left\|\hat{X}^h_\pi(t)-X(t,x_0, A_\pi^h)\right\|
>\left[\left\|X^h(0)-x_0\right\|+I_0(h)T +\epsilon\right]e^{L_1 T}\right\}
\leq\frac{J(h,T)}{\epsilon^2}.
\label{T5}
\ee

\label{teo5}
\end{theorem}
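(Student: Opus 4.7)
The plan is a standard fluid-limit argument: a Dynkin martingale decomposition followed by a Grönwall estimate in the path norm, and a Doob-Chebyshev bound on the martingale term. The key structural observation is that on each slot $[k\tau,(k+1)\tau)$ the control is the constant $\pi_k$, measurable with respect to $X^h_\pi(k\tau)$, so the generator $\Lambda_{\pi_k,h}$ from \eqref{gen} governs the chain on that slot and the Dynkin formula applies interval by interval. A direct comparison of \eqref{gen} with \eqref{effe} (evaluated on the coordinate function $x\mapsto x_i$) shows, remarkably, that $\Lambda_{b,h}(x_i)=f_i(x,b)$ exactly, with no $O(h)$ discretization remainder. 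Hence
\be
X^h_\pi(t)=X^h_\pi(0)+\int_0^t f\bigl(X^h_\pi(s),A^h_\pi(s)\bigr)\,ds + M^h(t),
\ee
where $M^h=(M^h_i)_i$ is an $\ell^2$-valued cadlag martingale obtained by concatenating the Dynkin martingales across the $n(h)$ slots.

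First I would subtract \eqref{fluxalphapi} from the above and use the Lipschitz continuity of $f$ in $x$ (constant $L_1$, extracted from the bound \eqref{l1L} and the $\mathcal{C}^2$-regularity (H2) on the compact set $S$) to derive, via the usual Grönwall lemma,
\be
\sup_{0\leq t\leq T}\bigl\|X^h_\pi(t)-X(t,x_0,A^h_\pi)\bigr\|\;\leq\;\Bigl(\bigl\|X^h_\pi(0)-x_0\bigr\|+\sup_{0\leq t\leq T}\|M^h(t)\|\Bigr)e^{L_1 T}.
\ee
Next I would control $M^h$ via its predictable quadratic variation. Each coagulation jump $(i,j)\mapsto i+j$ changes the state by a vector of $\ell^2$-norm $\leq h\sqrt{3}$ and occurs at rate $\tfrac{1}{h}C_{ij}(x,b)x_ix_j$; summing over $(i,j)$ yields total coagulation rate $\leq C R^2/h$ thanks to \eqref{CF} and the bound $\|x\|_{\ell^1}\leq R$ on $S$. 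The symmetric computation for fragmentation gives rate $\leq FR/h$. Multiplying by $h^2$ (the squared jump size) and integrating in time produces
\be
E\,\|M^h(T)\|_{\ell^2}^2\;\leq\;c(C,F,R)\,hT,
\ee
whereupon Doob's $L^2$ inequality (valid in any separable Hilbert space such as $\ell^2$) yields $E\sup_{t\leq T}\|M^h(t)\|_{\ell^2}^2\leq 4c(C,F,R)hT=:J(h,T)$, with $J(h,T)\to 0$ by (H6). Chebyshev's inequality then delivers the tail bound with this $J$.

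The final step is to replace $X^h_\pi(t)$ by its affine interpolation $\hat{X}^h_\pi(t)$. Since $X^h_\pi$ and $\hat{X}^h_\pi$ agree at the grid points $k\tau$, on each $[k\tau,(k+1)\tau]$ one has
\be
\bigl\|X^h_\pi(t)-\hat{X}^h_\pi(t)\bigr\|\;\leq\;2\sup_{s\in[k\tau,(k+1)\tau]}\bigl\|X^h_\pi(s)-X^h_\pi(k\tau)\bigr\|,
\ee
and the Dynkin decomposition on that slot, combined with the uniform bound $\|f(x,b)\|\leq K_f$ on $S$ from \eqref{uno} and the Doob estimate on a slot of length $\tau$, shows the right-hand side is $O(\tau)+O(\sqrt{h\tau})$ in expectation. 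Absorbing these quantities (which vanish as $h\to0$ by (H4)-(H6)) into a deterministic function $I_0(h)$ multiplied by $T$ on the exponent side, and combining with the Grönwall bound from Step 1, produces the announced \eqref{T5}.

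The main obstacle I expect is the bookkeeping around the infinite-dimensional quadratic variation: one must verify that the $\ell^2$-valued Doob inequality applies (essentially verification of square-integrability of every component together with Fubini to commute summation over $i$ and time integration), and that the rate$\times$jump-size$^2$ contributions from the $1/h$ prefactor in \eqref{gen} combine to $O(h)$ per unit time rather than diverging. A secondary technical point is to confirm that $A^h_\pi$, being predictable step-by-step (constant on each slot and measurable at its left endpoint), preserves the martingale property of $M^h$ across slot boundaries, which follows from the tower property applied to the filtration generated by $X^h_\pi$ jointly with the slot partition.
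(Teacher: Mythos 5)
Your argument is essentially correct, but it takes a genuinely different route from the paper. The paper does not prove Theorem \ref{teo5} directly: it reduces Theorems 1--4 to the verification of the conditions (A1)--(A3) and then invokes the general convergence theorems of Gast, Gaujal and Le Boudec \cite{M} as a black box. Concretely, Section 4 of the paper establishes (A1) (the per-slot transition counts (\ref{delta1})--(\ref{delta2}), via domination by a Poisson process), (A2) (the drift error (\ref{I}), via a second-order Taylor expansion of the semigroup), and (A3) (Lipschitz continuity of the one-step drift $F^h$, via the marching-soldiers Markovian coupling together with Lemma \ref{lemrho}); the probabilistic core of the estimate --- the analogue of your Dynkin-plus-Doob argument --- lives inside \cite{M} and is carried out there in discrete time on the grid $k\tau$ using $F^h$ and $\Delta^h$. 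Your proposal instead runs the martingale decomposition in continuous time and exploits the exact identity $\Lambda_{b,h}G_k=f_k$ (which the paper also derives, equation (\ref{landaf}), but uses only to quantify $I_0(h)$): this eliminates the drift discretization error at the grid points, makes the coupling argument and the transition-count bounds unnecessary, and replaces the Lipschitz constant $L_1(h)$ of $F^h$ by the simpler constant $K$ for $f$. The price is that you must carry out the $\ell^2$-valued quadratic-variation and Doob estimates yourself, which you correctly identify as the delicate point; your rate-times-jump-size-squared computation giving $E\|M^h(T)\|^2\leq c(C,F,R)\,hT$ is right. The one place you are loose is the interpolation step: you bound the per-slot oscillation ``in expectation'' and then absorb it into a deterministic $I_0(h)$ inside a probability statement, which is not legitimate as written; however, the martingale part of that oscillation is dominated pathwise by $2\sup_{t\leq T}\|M^h(t)\|$, which is already controlled on the very event your Doob--Chebyshev bound provides, and the drift part is deterministically at most $L_2\tau$, so the fix requires no new ideas. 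Overall your approach is arguably better adapted to the infinite-dimensional state space than importing the finite-state results of \cite{M}, at the cost of redoing their probabilistic machinery.
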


 If (\ref{xh}) holds this theorem shows the convergence in probability of the controlled system with $N$ agents,  with explicit bounds. 

The second statement deals with the convergence of the value for the controlled system of small players to the value of the second auxiliary system. Let $\pi$ be a policy and $A_\pi^h$ be the sequence of actions corresponding to a trajectory of $X_\pi^h$, as in notation \ref{def9}. Equation (\ref{val}) defines the value for the deterministic limit, whereas $\alpha$ is an action function. When applying the random action function $A_\pi^h$, this defines a random variable $v_{A_\pi^h}(x_0)$. A consequence of Theorem \ref{teo5} is the convergence of $V_\pi^h(X^h(0))$ to the expectation of this random variable.

\begin{theorem}
Let $A_\pi^h$ be the random action function associated with $X_\pi^h$ as in notation \ref{def9}. 
Under assumptions (H1)-(H6), there exist a function $B$ satisfying
\be
\lim_{\substack{h\rightarrow0 \\ \delta\rightarrow0}}B(h,\delta)=0
\ee
such that
\be
\left|V_\pi^h(X^h(0)) -E[v_{A_\pi^h}(x_0)]\right|\leq B(h,\left\|X^h(0)-x_0\right\|) .
\label{T6}
\ee
\label{teo6}
\end{theorem}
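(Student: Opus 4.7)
The plan is to decompose the difference $V_\pi^h(X^h(0))-E[v_{A_\pi^h}(x_0)]$ along the discrete grid of time steps, control each piece via the Lipschitz properties of the rewards assumed in (H3), and then invoke Theorem \ref{teo5} to turn the pathwise comparison into an estimate on expectations. The key point is that the \emph{same} random action function $A_\pi^h$ drives both systems, which lets me treat the two running rewards term by term.

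More concretely, I would first rewrite
\[
v_{A_\pi^h}(x_0)=\sum_{k=0}^{n-1}\int_{k\tau}^{(k+1)\tau}B\bigl(X(s,x_0,A_\pi^h),\pi_k\bigr)\,ds+\int_{n\tau}^{T}B(\cdots)\,ds+V_0\bigl(X(T,x_0,A_\pi^h)\bigr),
\]
using that $A_\pi^h(s)=\pi_k$ on $[k\tau,(k+1)\tau)$. Subtracting from (\ref{Vpi}) and regrouping, the discrepancy becomes, inside the expectation, a sum of integrals of $B(X_\pi^h(k\tau),\pi_k)-B(X(s,x_0,A_\pi^h),\pi_k)$, a boundary piece $\int_{n\tau}^{T}$ of size at most $M_B\tau$ (where $M_B$ bounds $|B|$ by (H3)), and a terminal piece $V_0(X_\pi^h(n\tau))-V_0(X(T,x_0,A_\pi^h))$. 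By (H3), each $B$-integrand is bounded by $L_B\|X_\pi^h(k\tau)-X(s,x_0,A_\pi^h)\|_{\ell^2}$, and the triangle inequality splits this into $\|\hat{X}_\pi^h(k\tau)-X(k\tau,x_0,A_\pi^h)\|+C_f\tau$, where $C_f:=\sup_{x\in S,\,b\in E}\|f(x,b)\|_{\ell^1}$ is finite by compactness of $S$ and (H2), and I used $\|\cdot\|_{\ell^2}\le\|\cdot\|_{\ell^1}$ on nonnegative sequences of bounded $\ell^1$-norm. Doing the same for $V_0$ and introducing the random quantity $D:=\sup_{0<t<T}\|\hat{X}_\pi^h(t)-X(t,x_0,A_\pi^h)\|$, I arrive at a deterministic bound of the shape
\[
\bigl|V_\pi^h(X^h(0))-E[v_{A_\pi^h}(x_0)]\bigr|\le (L_B T+L_{V_0})\,E[D]+C_1\tau,
\]
where $C_1$ depends only on $L_B,L_{V_0},M_B,C_f$.

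It then remains to control $E[D]$ using Theorem \ref{teo5}. Write $\delta:=\|X^h(0)-x_0\|$; for every $\epsilon>0$,
\[
P\bigl(D>(\delta+I_0(h)T+\epsilon)e^{L_1 T}\bigr)\le \frac{J(h,T)}{\epsilon^2}.
\]
Because all trajectories remain in the compact $S$, $D$ is almost surely bounded by some constant $M_D$, so
\[
E[D]\le (\delta+I_0(h)T+\epsilon)e^{L_1 T}+M_D\,\frac{J(h,T)}{\epsilon^2}.
\]
Choosing $\epsilon=J(h,T)^{1/3}$ (which tends to $0$ with $h$) makes both contributions vanish as $h\to 0$ and $\delta\to 0$. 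Defining
\[
B(h,\delta):=(L_B T+L_{V_0})\Bigl[(\delta+I_0(h)T+J(h,T)^{1/3})e^{L_1 T}+M_D J(h,T)^{1/3}\Bigr]+C_1\tau(h)
\]
gives a function with $\lim_{h\to0,\,\delta\to0}B(h,\delta)=0$ that satisfies (\ref{T6}).

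The main obstacle, I expect, is technical rather than conceptual: reconciling the norms used in (H3) (Lipschitz in $\ell^2$) with the norm in which Theorem \ref{teo5} gives its bound, and ensuring that the boundary term from $T-n\tau<\tau$ is absorbed correctly under (H4). Both are handled by the elementary inequality $\|\cdot\|_{\ell^2}\le\|\cdot\|_{\ell^1}$ on positive summable sequences of bounded mass and by boundedness of $f$, but these checks have to be done uniformly in $\pi$ so that $B(h,\delta)$ does not depend on the policy.
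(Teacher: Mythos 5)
Your proposal is correct, but it is worth noting that the paper itself contains no direct proof of this theorem: the authors' strategy is to verify conditions (A1)--(A3) (drift convergence, Lipschitz continuity of $F^h$ and $f$, bounds on the number of transitions, and the reward hypotheses) and then invoke Theorems 2--6 of \cite{M} wholesale, recording only the resulting explicit constant $B(h,\delta)$ in (\ref{B}). What you have done is reconstruct, from Theorem \ref{teo5} and (H3) alone, the argument that the paper delegates to \cite{M}: decompose the value difference over the time grid, bound each running-reward increment by $K_B$ times the pathwise distance plus an $O(\tau)$ correction from the motion of the limit flow within a step, reduce everything to $E[D]$ with $D=\sup_{0<t<T}\|\hat{X}^h_\pi(t)-X(t,x_0,A_\pi^h)\|$, and convert the tail bound of Theorem \ref{teo5} into an expectation bound by splitting on the good/bad event and optimizing $\epsilon\sim J(h,T)^{1/3}$. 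The shape of the paper's constant confirms this is essentially the intended argument: the $\tau\|B\|_\infty$ term is your boundary piece on $[n\tau,T]$, the $K_B(\delta+I_0T)(e^{L_1T}+(e^{L_1T}-1)/L_1)$ term is your Gronwall/Lipschitz contribution, and the $K_B^{2/3}\|B\|_\infty^{1/3}J(h,T)^{1/3}$ term is the optimized bad-event contribution. The only substantive differences are cosmetic: on the bad event you bound $D$ by the diameter $M_D\le 2R$ of the compact state space, whereas \cite{M} bounds the reward difference directly by $\|B\|_\infty$ (which is why $\|B\|_\infty^{1/3}$ rather than $M_D$ appears in (\ref{B})), and the paper's bound carries an extra term $K_B\sqrt{2}\,I_1(h)$ accounting via (A1) for the within-step motion of $X^h_\pi$ relative to its interpolation. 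Your version is marginally cruder but equally valid, is uniform in $\pi$ as required since $I_0$, $J$ and $L_1$ do not depend on the policy, and has the virtue of making the paper self-contained at this point.
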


 This implies that if (\ref{xh}) holds almost surely, respectively in probability, then
\be
\h \left|V_\pi^h(X^h(0)) -E[v_{A_\pi^h}(x_0)]\right| =0
\ee
almost surely, respectively in probability.



The proofs of the main results use the two auxiliary systems. The first auxiliary system provides a strategy for the system with $N$ agents derived from an action function of the mean field limit. It can not do better than the optimal value of the system of small players and it is close to the optimal value of the mean field limit. Therefore the optimal value for the system with $N$ players is lower bounded by the optimal value for the mean field limit.

The second auxiliary system is used in the opposite direction: it shows that for large $N$ the two optimal values are the same.

\subsection{Requirements for convergence}


Let us denote by $||x||$ the $\ell^2$-norm of $x$ and define the \emph{drift} of the model as
\be
F^h(x,b):=E( X^h(\tau, x, b) -x) .
\label{F}
\ee
Due to Theorems 2-6 in \cite{M}, in order to prove Theorems 1-4 it is sufficient to show that 
\begin{itemize}
	\item \textbf{(A1)} There exist some non random functions $I_1(h)$ and $I_2(h)$ such that 
	$$\lim_{h\rightarrow0} I_1(h) = \lim_{h\rightarrow0} I_2(h) =0$$
	and that for all $x$ and all policies $\pi$ the number coalitions $\Delta_\pi^h(k)$ that perform a transition between time  step $k\tau$ and $(k+1)\tau$ satisfies
	\begin{eqnarray}
	E(\Delta_\pi^h(k) |X_\pi^h(k\tau)=x) &\leq& \frac{1}{h} I_1(h) \label{delta1}\\
		E(\Delta_\pi^h(k)^2 |X_\pi^h(k\tau)=x) &\leq& \frac{1}{h^2} \tau I_2(h) \label{delta2};
		\end{eqnarray}

\item \textbf{(A2)} 
There exist  a function $I_0(h)$ such that 
 $\lim_{h\rightarrow0} I_0(h) =0$ and
\be
\left\|\frac{F^h(x,b)}{\tau(h)}- f(x,b)\right\|\leq I_0(h)
\label{I}
\ee
 for every $x\in S$ and $b\in E$ where  the function $f$ is the one in  (\ref{effe}), and moreover
$f$ is defined on $S\times E$ and there exists a constant $L_2$ such that 
\be
|f(x,b)|\leq L_2 ;
\label{bound}
\ee

\item \textbf{(A3)} 
There exist constants $L_1$, $K$ and $K_B$ such that for all $x$, $y\in S$ and $a$, $b \in E$ 
\begin{eqnarray}
||F^h(x,b)-F^h(y,b)|| &\leq& L_1||x-y|| I(h) \label{lipF}\\
||f(x,b)-f(y,a)|| &\leq& K(||x-y|| + d(a,b))\label{lipf}\\
||B(x,b)-B(y,b)|| &\leq& K_B||x-y|| \label{lipB}\\
||V_0(x)-V_0(y)|| &\leq& K_B||x-y|| 
\label{lipV0}
\end{eqnarray}
\noindent
and the reward is bounded:
\be
\sup_{\substack{x\in S\\b\in E}} \max\left\{|B(x,b)|, |V_0(x,b)|\right\} =: ||B||_{\infty}.
\label{boundrew}
\ee

\end{itemize}

We will show in the next section that if our assumptions (H1)-(H3) are satisfied then (A1)-(A3) hold.

Let us now fix the functions appearing in (A1), (A2) and (A3):  
\begin{eqnarray}
K &:=& 6CR + 3F + 3R[C(1)R + F(1)],\\
 L_2 &:=& 3CR^2 + 3FR,\\
I_0(h) &:=& \sqrt{N(h)} \frac{\tau}{2}(R_1 + h R_2),
\label{io}
\\
I_1(h) &:=& \tau(CR^2 +F),
\label{i1}
\\
I_2(h) &:=& (CR^2 +F) [ \tau(CR^2 +F) + h],
\label{i2}
\end{eqnarray}
where
\begin{align*}
R_1 &:= 3(CR^2 + FR)( 6CR +3F +3 C(1)R^2 +F(1)R),\\
R_2&:= 54(CR^2 + FR)(C +F(1) +C(1)R +F(1)R + C(2)R^2).
\end{align*}
Clearly $\h I_0 (h) = \h I_1 (h) =\h I_2 (h) =0$ if (H4) holds.
 $L_1$ actually depends on $h$
\be
L_1 = L_1 (h):=K e^{M_2 \sqrt{N(h)} \tau},
\label{l1}
\ee
where
$M_2 := 3(C(1)R^2 + 2CR + F(1) R +F)$.  
Hence $L_1$ is  tends to the constant $K$ by (H4), as $h$ tends to 0.

Let us define the functions $I_0',J,B,B'$ appearing in the statements of Theorems 2-4 by the following equations
\begin{align}
I_0'(h,\alpha)&:= I_0(h) +\tau K e^{(K-L_1)T}\cdot\left[\frac{K_\alpha}{2}+2 \left(1+\min \left\{\frac{1}{\tau},p\right\}\right)||\alpha||_\infty\right],\\
J(h,T)&:=8T\left\{L_1^2\left[I_2(h)\tau^2 + I_1(h)^2(T+\tau)\right]+
N(h)^2\left[2I_2(h)+\tau L_2^2\right]\right\},
\label{J}
\\
B(h,\delta) &:= \tau ||B||_{\infty} + K_B \sqrt2 I_1(h) + K_B(\delta +I_0(h)T)\left(e^{L_1 T}+\frac{e^{L_1 T}-1}{L_1}\right)
\label{B}\\
&\quad+\frac{3}{2^\frac13}\left[e^{L_1 T} + \frac{e^{L_1 T}-1+\frac\tau2}{L_1}\right]^\frac23 \cdot K_B^\frac23||B||_\infty^\frac13 J(h,T)^\frac13 (T+1)^\frac23
\end{align}
and $B'(h,\delta)$ has the same expression as $B(h,\delta)$ replacing $I_0(h)$ by $I_0'(h,\alpha)$.
From (H4) and (H6) follow that  $\h J(h,T) = \h  I_0'(h,\alpha) =0$ and 
$\lim_{\substack{h\rightarrow0 \\ \delta\rightarrow0}}B'(h,\delta)= \lim_{\substack{h\rightarrow0 \\ \delta\rightarrow0}}B(h,\delta)=0$.

\subsection{Constructing an optimal policy}

By means of corollary \ref{cor4}, an optimal action function for the mean field limit is asymptotically optimal for the system of small players. This provides a way for constructing an asymptotically optimal policy.

We denote by $u(x,t)$ the optimal cost over horizon $[t,T]$ for the limiting system.
Under our hypothesis, the following  proposition holds.

\begin{proposition}
The value function $u(x,t)$ is the unique, bounded and uniformly continuous, viscosity solution in $S\times [0,T] \subset  \ell^2 \times [0,T]$  of the Hamilton-Jacobi-Bellman equation
\be
-\frac{\partial u(x,t)}{\partial t} - \max_{b\in E} \{ \nabla u(x,t) \cdot f(x,b) + B(x,b) \} = 0
\label{hjb}
\ee
which satisfies the terminal condition $u(x,T) = V_0(x)$.
\label{HJB}
\end{proposition}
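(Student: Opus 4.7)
The plan is to follow the classical programme for viscosity solutions of deterministic control problems in a separable Hilbert space, exploiting the crucial fact that $S$ is norm-compact in $\ell^2$ (by the first lemma of Section 2.1 together with the continuous embedding $\ell^1\hookrightarrow\ell^2$). Three steps are required: regularity of $u$, verification of the viscosity-solution property via the dynamic programming principle (DPP), and uniqueness through a comparison principle; the last is the principal obstacle.

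First, I would show that $u$ is bounded and uniformly continuous on $S\times[0,T]$. Boundedness by $(T+1)\|B\|_\infty$ is immediate from (\ref{boundrew}). For uniform continuity, Gronwall's lemma applied to (\ref{fint}), using the $\ell^2$-Lipschitz property of $f$ on $S$ recorded in (\ref{lipf}), provides a constant $K$ independent of $\alpha$ such that $\|X(t,x,\alpha)-X(t,y,\alpha)\|\le e^{Kt}\|x-y\|$; combining this with (\ref{lipB})--(\ref{lipV0}) and the boundedness of $B$ yields
\be
|u(x,t)-u(y,s)|\le C\bigl(\|x-y\|+|t-s|\bigr)
\ee
for all $x,y\in S$ and $s,t\in[0,T]$, where the $t$-modulus also uses the bound (\ref{bound}) on $f$ via the flow.

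Second, I would establish the DPP: for $0\le r\le T-t$,
\be
u(x,t)=\sup_{\alpha}\left\{\int_{0}^{r} B(X(\sigma,x,\alpha),\alpha(t+\sigma))\,d\sigma+u(X(r,x,\alpha),t+r)\right\},
\ee
proved by the standard concatenation of near-optimal action functions on $[t,t+r]$ and $[t+r,T]$. The viscosity-solution property then follows in the usual way: given a test function $\varphi\in\mathcal{C}^1(S\times[0,T))$ such that $u-\varphi$ attains a local maximum at $(x_0,t_0)$ with $t_0<T$, freezing the action at a constant $b\in E$ on $[t_0,t_0+h]$, applying the DPP, and Taylor-expanding $\varphi(X(h,x_0,b),t_0+h)$ along the flow gives, after dividing by $h$ and letting $h\downarrow 0$,
\be
-\partial_t\varphi(x_0,t_0)-\nabla\varphi(x_0,t_0)\cdot f(x_0,b)-B(x_0,b)\le 0;
\ee
taking the supremum over $b\in E$ yields the subsolution inequality. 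The supersolution inequality is obtained by the symmetric argument with near-optimal (rather than constant) actions on short intervals.

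Third, uniqueness rests on a comparison principle, and this is the delicate ingredient: in infinite dimensions $\|x-y\|_{\ell^2}^{2}$ is not Fr\'echet-$C^{1,1}$ on the whole space, so the Crandall--Lions theory customarily resorts to the Ekeland variational principle and perturbed optimisation. These complications are circumvented here by compactness. Given a bounded uniformly continuous viscosity subsolution $u_1$ and supersolution $u_2$ of (\ref{hjb}) with $u_1(\cdot,T)\le u_2(\cdot,T)$, and any $\eta>0$, I would consider
\be
\Phi_\varepsilon(x,y,t,s):=u_1(x,t)-u_2(y,s)-\frac{\|x-y\|_{\ell^2}^{2}}{2\varepsilon}-\frac{(t-s)^{2}}{2\varepsilon}-\eta(T-t),
\ee
whose maximum over the compact set $S\times S\times[0,T]^{2}$ is attained at some $(x_\varepsilon,y_\varepsilon,t_\varepsilon,s_\varepsilon)$. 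Freezing two of the four variables at the maximiser produces smooth test functions for $u_1$ and $u_2$ with matching gradients $\varepsilon^{-1}(x_\varepsilon-y_\varepsilon)$ and $\varepsilon^{-1}(t_\varepsilon-s_\varepsilon)$; subtracting the resulting viscosity inequalities and using the Lipschitz bounds (\ref{lipf})--(\ref{lipV0}) together with uniform continuity of $u_1,u_2$ leads to an estimate $\eta\le C\|x_\varepsilon-y_\varepsilon\|_{\ell^2}+o(1)$ as $\varepsilon\downarrow 0$. Compactness of $S$ forces $x_\varepsilon-y_\varepsilon\to 0$ in $\ell^2$ along a subsequence, hence $\eta\le 0$; since $\eta>0$ was arbitrary we conclude $u_1\le u_2$ throughout $S\times[0,T]$. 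Applied with $u_1=u$ against any other bounded uniformly continuous viscosity solution of (\ref{hjb}) with terminal data $V_0$, and symmetrically, this delivers the asserted uniqueness.
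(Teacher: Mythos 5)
Your programme is essentially correct, but it is a genuinely different (and far longer) route than the one the paper takes. The paper does not reconstruct any of the viscosity-solution machinery: it only checks the two structure conditions on the Hamiltonian $H(x,p)=\max_{b\in E}\{p\cdot f(x,b)+B(x,b)\}$, namely $|H(x,p)-H(y,p)|\le C\|x-y\|(1+\|p\|)$ (from (\ref{lipf}) and (\ref{lipB})) and $|H(x,p)-H(x,q)|\le C\|p-q\|$ (from the bound (\ref{bound}) on $f$), and then invokes Theorem 5.1 of Crandall and Lions \cite{CL} to obtain existence and uniqueness of bounded uniformly continuous viscosity solutions in the Hilbert space $\ell^2$ wholesale. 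You instead rebuild the three classical blocks---regularity of $u$ via Gronwall, the dynamic programming principle, and a doubling-of-variables comparison---and your key observation, that norm-compactness of $S$ in $\ell^2$ lets the penalised functional $\Phi_\varepsilon$ attain its maximum without Ekeland's variational principle, is precisely what makes the self-contained route feasible. What your version buys is transparency and an argument that actually lives on the compact invariant set $S$; what the paper's version buys is brevity, at the cost of applying a theorem formulated on open subsets of a Hilbert space to a set with empty interior. On that last point your sketch shares the paper's gap: since every point of $S$ is a boundary point in $\ell^2$, you should state explicitly which notion of viscosity solution relative to $S$ you use (local extrema of $u-\varphi$ taken over $S$, with $\varphi$ smooth on a neighbourhood), check that the flow-invariance of $S$ is what makes both the sub- and supersolution verifications and the penalised test functions legitimate in that constrained sense, and include the standard case analysis when the maximiser of $\Phi_\varepsilon$ lands at $t_\varepsilon=T$ or $s_\varepsilon=T$, where the terminal condition rather than the equation must be used.
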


Let us recall that the definition of viscosity solution in a Hilbert space, as $\ell^2$, does not differ from the usual one. 
Further, under our assumptions, the Hamiltonian defined as 
$H(x,p) := \max_{b\in E} \{ p \cdot f(x,b) + B(x,b) \}$ for any $(x,p) \in S\times \ell^2$ is such that
\begin{align*}
| H(x,p) -H(y,p)| &\leq C ||x-y|| (1+||p||)\\
| H(x,p) -H(x,q)| &\leq C ||p-q||, 
\end{align*} 
where $C$ is a constant. 
 Therefore existence and uniqueness of bounded and uniformly continuous viscosity solutions to (\ref{hjb}) are implied by
 Theorem 5.1 in \cite{CL}.



We state  the algorithm presented in \cite{M} for constructing an asymptotically optimal policy for the system of small players $X^h$ via an optimal action function for the mean field limit $X$:

\begin{itemize}
	\item Let $X^h(0)$ be the initial condition of the limiting system. Solve the Hamilton-Jacobi-Bellman equation (\ref{hjb}) on $[0, \tau n(h)]$. Assume this provides an optimal control function $\alpha_\ast $;
	\item Construct a policy $\pi$ for the system of small players: the action to be taken under state $X^h(k\tau)$ at step $k$ is
	$$\pi_k(X^h(k\tau)) := \alpha_\ast ( k\tau) .$$
\end{itemize}

The asymptotic optimality of the related value is ensured by corollary \ref{cor4}.
The policy $\pi$ constructed above is static in the sense that it does not depend on the state $X^h(k\tau)$ but only on the initial state $X^h(0)$. The deterministic estimation of $X^h(k\tau)$ is provided by the differential equation.



The algorithm described above uses an optimal action function for the limiting system which may not exist, as we observed above.  A sufficient condition for its existence is that the set 
$f(x,E) \times B(x,E)$
 is convex for all $x\in S$; a proof of this fact can be found in \cite{Bardi}. 


However the existence of an optimal action function is actually not necessary in the algorithm described above. Indeed, if there is no optimal control of the HJB equation (\ref{hjb}), one can replace the optimal $\alpha_\ast$ used in the algorithm by an action function which is $h$-optimal. This still provides an asymptotically optimal policy.


\subsection{Example}
We show here that in a class of applications the limiting problem can be reduced to an optimization problem in one dimension.
This provides a computationally much more effective scheme in order to obtain an asymptotically optimal policy for the system of small players, in view of the algorithm presented above.

Firstly, we study a particular case in which the mean field limit admits an explicit solution.
So let us consider $E = [0,1]$  and a particular shape of the functions $C_{ij}$ and $F_{ij}$, in which the there is no dependence on $x$: 
\begin{align}
C_{ij} (x,b) &= b\\
F_{ij} (x,b) &= \frac{1-b}{i-1}.
\end{align}
In particular if $b$ is 1 then only merging is possible. While if it is  0 only splitting is possible.

Let $m(x):= \sum_i x_i$ be the norm of $x\in\ell^1$.
With these rates the limiting evolution $f$ in \eqref{effe} can be studied considering only the dynamics of the norm.
Evaluating the limiting generator \eqref{inff} on $G=m$ we obtain
\be
\sum_{i} f_i(x,b) = -b m^2 + (1-b) m .
\ee
Hence the evolution of $m$ is described by
\be
\begin{cases}
\dot{m}=  -b m^2 + (1-b) m\\
m(0) =m_0,
\end{cases}
\label{mbb}
\ee
which is in fact an ODE on $\mathbb{R}_+$.

If $b$ is constant then the explicit solution to \eqref{mbb} is 
\be
m(t,m_0,b) = \frac{m_0(1-b)e^{(1-b)t}}{1-b-m_0 b + m_0 e^{(1-b)t}}.
\ee
if $b\neq 1$. In particular, if $b=0$, $m(t,m_0, 0) = m_0 e^t$. While if $b=1$ 
\be
m(t,m_0, 1) = \frac{m_0}{1+t m_0}.
\ee
Observe that $m(t,m_0,b)$ is a continuous and strictly decreasing function of $b\in [0,1]$, for any value of $0<t\leq T$ and $m_0>0$.

We assume that there is no instantaneous reward and that the final reward $V_0 = V_0 (m)$ is a strictly concave function 
of $m$ which has a unique maximum in $m^\ast$.
The value function $V$ in this case can be computed explicitely, that is 
\be
V(t,m) = \begin{cases}
V_0 (m e^{T-t} ) & \mbox{ if } m< m^\ast e^{-(T-t)}\\
V_0(m^\ast) & \mbox{ if }  m^\ast e^{-(T-t)}\leq m \leq \frac{m^\ast}{1-(T-t)m^\ast}\\
V_0 \left(\frac{m}{1+(T-t)m}\right) &  \mbox{ if } m>\frac{m^\ast}{1-(T-t)m^\ast}.
\end{cases}
\label{mb}
\ee

If $m^\ast e^{-(T-t)}\leq m_0 \leq \frac{m^\ast}{1-(T-t)m^\ast}$ denote by $b^\ast(T,m_0)$ the unique value of $b\in [0,1]$ such that 
$m(T,m_0,b) = m^\ast$. 
Therefore, if $m(0) = m_0$, an optimal action function for the mean field limiting problem is to choose the constant value
\be
\alpha^\ast (t,m_0) =  \alpha^\ast(m_0)  :=
\begin{cases}
1 & \mbox{ if }  m_0 <  m^\ast e^{-(T-t)}\\
b^\ast(T,m_0) &  \mbox{ if } m^\ast e^{-(T-t)} \leq m_0 \leq \frac{m^\ast}{1-(T-t)m^\ast}\\
0 & \mbox{ if }  m_0 > \frac{m^\ast}{1-(T-t)m^\ast}.
\end{cases}
\ee

This example can be generalized to the case in which the rates $C_{ij}$ and $F_{ij}$ depend also on the norm of $x$. Namely, let
\begin{align}
C_{ij} (x,b) &= b f_C (m(x))\\
F_{ij} (x,b) &= \frac{1-b}{i-1} f_B (m(x)),
\end{align}
where $f_C$ and $f_B$ are some Lipschitz continuous and non-negative functions. 
The dynamics (\ref{mbb}) for $m$ becomes
\be
\begin{cases}
\dot{m}=  -b f_C (m)m^2 + (1-b) f_B (m) m\\
m(0) =m_0.
\end{cases}
\ee

Also in this case, the  mean field limiting problem  reduces to an optimization problem in one dimension. Thus there are numerical schemes which provide an $\frac{1}{N}$-optimal action function in feedback form (see e.g. \cite{Bardi}). These are much more efficient than trying to solve the Bellman equation (\ref{bell}). In fact the prelimit problem is allowed to be tackled when $N$ is lower than a few tens:
see \cite{ten}.


\section{Proofs of convergence}

In this section we show that (A1)-(A3) hold in our fragmentation-coagulation model.
So  (H1)-(H6) hold and recall that the compact state space $S\subset \ell^1$ is stable for all the dynamics  considered.

\subsection{Lipschitz continuity of the limit } 

We  verify that (\ref{lipf}) and (\ref{bound}) are satisfied for the function $f$ in (\ref{effe}), if (H2) holds. We actually do not need $f$ to be in $\mathcal{C}^2 (S)$, but only in   $\mathcal{C}^1 (S)$. We use the fact that  
\be
||x|| =||x||_{\ell^2}\leq ||x||_{\ell^1}\leq ||x||_{\ell^1(L)}\leq R
\label{catena}
\ee
for any $x\in S$. 
So equation (\ref{uno}) gives 
$$||f(x)||\leq 3 C R^2 +3FR $$
for all $x\in S$, which is the assumption (\ref{bound}) with bound 
\be
L_2:= 3 C R^2 +3FR.
\label{L2}
\ee

By (\ref{catena}) and its definition (\ref{der}) we get also
$$||Df(x)||_{\ell^2} \leq ||Df(x)||_{\ell^1} \quad \forall x\in S$$
which, together with equation (\ref{due}), yields
\be
||Df(x)||_{\ell^2}\leq 6 C R +3 F  +3R [C(1) R +F(1)].
\label{boundder}
\ee

Applying then the mean  value theorem in the convex space $S$, for every $x$ and $y$ in $S$  there exist a point $z$ in the segment $[x,y]$ such that 
$$f(y) - f(x) =  Df(z).(y-x) .$$
Thus taking the $\ell^2$-norm and using the fact that $Df$ is a bounded linear map we get
$$||f(y) - f(x)|| \leq  ||Df(z)||_{\ell^2} ||y-x||$$
for all $x$ and $y$ in $S$. According to (\ref{boundder}) we find that the limiting function is Lipschitz continuous in $x$, for the 
$\ell^2$-norm with 
\be
K= 6 C R +3 F  +3R [C(1) R +F(1)]
\label{K}
\ee
as a Lipschitz constant.



Clearly (H3) implies (\ref{lipB}), (\ref{lipV0}) and (\ref{boundrew}).

\subsection{Convergence of the drift}

Here we  prove (\ref{I}).
Recall that the drift of the model is
$$F^h(x,b) := E[X^h(\tau,x,b) -x].$$
The semigroup $U^h_t$ of the Markov process $X^h$ is  defined by
$$U^h_t G (x,b) := E[G(X^h(t,x,b))]$$
for any $G\in \mathcal{C}(S, \R)$ and $t\geq0$. $U_0$ is the identity and $U_t$ satisfies the Kolmogorov differential equation
\be
\frac{\partial}{\partial t } U^h_t G (x,b) = \Lambda_{h,b} U^h_t G (x,b) =
U^h_t \Lambda_{h,b} G (x,b) ,
\label{DUL}
\ee
where $\Lambda_{h,b}$ is the infinitesimal generator defined in (\ref{gen}).

We apply $U^h_t$ to the projection on the $k$-th coordinate $G_k$. This function is  in $\mathcal{C}(S, \R)$ and  
\be
F^h_k(x,t) := G_k\left(E[X^h(\tau,x,b) -x]\right) = E G_k(X^h(\tau,x,b)) -x_k = U^h_\tau G_k (x,b) - G_k(x).
\label{Fhk}
\ee

Let us denote 
$u_k(t,x,b):= U^h_t G_k (x,b).$
The Taylor formula applied to $u_k$ in the variable $t$ gives
\be
u_k(\tau,x,b) = u_k(0,x,b) + \tau \frac{\partial u_k}{\partial t}(0,x,b) + \frac{\tau^2}{2} \frac{\partial^2 u_k}{\partial t^2}(s,x,b)
\label{taylor}
\ee
for a fixed $s\in ]0,\tau[$ and for every $x$ and $b$.
We have
\be
u_k(0,x,b)= G_k(x) = x_k
\label{U0}
\ee
 and according to  (\ref{DUL}) 
\be
\frac{\partial u_k}{\partial t}(0,x,b) =\Lambda_{h,b} u_k (0,x,b) = \Lambda_{h,b} G_k(x)
\label{U'0}
\ee
for every $x$ and $b$.

The generator (\ref{gen}) calculated  on the projection, using $G_k(e_i) = \delta_{ik}$, gives 
\begin{align*}
\Lambda_{b,h} G_k(x) &= \frac1h \sum_{i,j} C_{i j}(x,b) x_i x_j \left[G_k(x-h e_i - h e_j+h e_{i+j}) -G_k(x)\right] 
\\
&+ \frac{1}{h}\sum_i \sum_{i<j} F_{ij}(x,b) x_i \left[G_k(x-h e_i + h e_j+h e_{i-j}) -G_k(x)\right]\\
&=  \sum_{i,j} C_{i j}(x,b) x_i x_j \left[\delta_{i+j,k}- \delta_{ik}- \delta_{jk}\right] \\
&+ \sum_i \sum_{i<j} F_{ij}(x,b) x_i \left[\delta_{jk}+\delta_{i-j,k}- \delta_{ik}\right]\\
&=\sum_{i<k} C_{i,k-i} (x,b) x_i x_{k-i} -\sum_j C_{kj} (x,b) x_k x_j 
-\sum_i C_{ik} (x,b) x_i x_k \\
&+  \sum_{i>k} F_{ik}(x,b)x_i + \sum_{i>k} F_{i,i-k}(x,b)x_i-\sum_{j<k}F_{kj}(x,b)x_k 
\end{align*}
and, observing that $C_{ij}= C_{ji}$ and $F_{ij} = F_{i,i-j}$, the latter expression is exactly $f_k(x,b)$ defined in (\ref{effe}), whence
\be
\Lambda_{b,h} G_k(x) = f_k(x,b)
\label{landaf}
\ee
for every $x$ and $b$.

We need also an estimate of the latter term in (\ref{taylor}). Equation (\ref{DUL}) yields 
$$\frac{\partial^2 u_k}{\partial t^2}(s,x,b) 
\frac{\partial}{\partial t} U_s^h \Lambda_{b,h} G_k (x)= 
U_s^h \Lambda_{b,h} \Lambda_{b,h} G_k(x) 
=U_s^h \Lambda_{b,h} f_k(x,b).$$
We use then the fact that the semigroup $U_s^h$ is, for any $s$, a contraction in the space $\mathcal{C}(S, \R)$ equipped with the sup-norm $||G||_{\infty}:= \sup_{x\in S} |G(x)|$.  Thus the latter term in (\ref{taylor}) is bounded by 
\be
\left|\frac{\partial^2 u_k}{\partial t^2}(s,x,b)\right|
\leq \left\|U_s^h \Lambda_{b,h} f_k\right\|_{\infty} 
\leq \left\|\Lambda_{b,h} f_k\right\|_{\infty} 
\label{U''}
\ee
for any $x\in S$, $b\in E$ and $s>0$.

The estimate for the norm of $\Lambda_{b,h} f_k$ is found applying again the Taylor formula to $f_k$:
\begin{align*}
\Lambda_{b,h}& f_k(x) = \frac1h \sum_{i,j} C_{i j}(x,b) x_i x_j \left[f_k(x-h e_i - h e_j+h e_{i+j}) -f_k(x)\right] \\
&+ \frac{1}{h}\sum_i \sum_{j<i} F_{ij}(x,b) x_i \left[f_k(x-h e_i + h e_j+h e_{i-j}) -f_k(x)\right]
\\
&= \sum_{i,j} C_{i j}(x,b) x_i x_j \sum_l \frac{\partial f_k}{\delta x_l} (x) \left(\delta_{i+j,l}- \delta_{il}- \delta_{jl}\right) \\
&+h \sum_{i,j} C_{i j}(x,b) x_i x_j    \sum_l \sum_m \frac{\partial }{\delta x_l} \frac{\partial }{\delta x_m} f_k(y) \left(\delta_{i+j,l}- \delta_{il}- \delta_{jl}\right)\left(\delta_{i+j,m}- \delta_{im}- \delta_{jm}\right)
\\
&+ \sum_i \sum_{j<i} F_{ij}(x,b) x_i \sum_l \frac{\partial f_k }{\delta x_l} (x) \left(\delta_{jl}+\delta_{i-j,l}- \delta_{il}\right)\\
&+ h\sum_i \sum_{j<i} F_{ij}(x,b) x_i \sum_l \sum_m\frac{\partial }{\delta x_l} \frac{\partial }{\delta x_m}f_k(z)\left(\delta_{jl}+\delta_{i-j,l}- \delta_{il}\right)\left(\delta_{jm}+\delta_{i-j,m}- \delta_{im}\right)
\end{align*}
for certain fixed points $y$ and $z$ in $S$.
This gives, using the estimates (\ref{CF}), (\ref{due}) and (\ref{tre}) and the definitions of the norms of the derivatives $Df$ and $D^2 f$,
\begin{align*}
\left|\Lambda_{b,h} f_k(x)\right| &\leq C ||x||_{\ell^1}^2 \left(3 ||Df(x)||_{\ell^1} + 9h ||D^2f(y)||_{\ell^1}\right)
\\
 &+  F ||x||_{\ell^1} \left(3 ||Df(x)||_{\ell^1} + 9h ||D^2f(z)||_{\ell^1}\right)
\\
&\leq C ||x||_{\ell^1}^2  [ 3\left(6 C ||x||_{\ell^1} +3 F  +3 [C(1) ||x||_{\ell^1} +F(1)]||x||_{\ell^1}\right)
\\
 &+54h \left(C +F(1) +[C(1) +F(2)]||y||_{\ell^1} +C(2) ||y||_{\ell^1}^2\right)]
\\
 &+F ||x||_{\ell^1} [ 3\left(6 C ||x||_{\ell^1} +3 F  +3 [C(1) ||x||_{\ell^1} +F(1)]||x||_{\ell^1}\right)
\\
 &+54h \left(C +F(1) +[C(1) +F(2)]||z||_{\ell^1} +C(2) ||z||_{\ell^1}^2\right)]
\\
&\leq (CR^2 + FR)[ 3( 6CR +3F +3 C(1)R^2 +F(1)R) 
\\
&+ 54h(C +F(1) +C(1)R +F(1)R + C(2)R^2)]
\end{align*}
for every $x\in S$ and $b\in E$ and $k =1,\ldots, N(h)$.

Therefore 
\be
\left\|\Lambda_{b,h} f_k\right\|_{\infty} \leq R_1 + h R_2
\label{landak}
\ee
for every $x\in S$ and $b\in E$ and $k =1,\ldots, N(h)$, where
$R_1 := 3(CR^2 + FR)( 6CR +3F +3 C(1)R^2 +F(1)R)$ and 
$R_2:= 54(CR^2 + FR)(C +F(1) +C(1)R +F(1)R + C(2)R^2)$.

Equations  (\ref{Fhk}) and (\ref{taylor}), by means of (\ref{U0}), (\ref{U'0}) and (\ref{landaf}),
lead to
$$F_k^h(x,b) - \tau f_k(x,b) = \frac{\tau^2}{2} \frac{\partial^2 u_k}{\partial t^2}(s,x,b)$$
and this applying (\ref{U''}) and (\ref{landak}) yields
$$\left|F_k^h(x,b) - \tau f_k(x,b)\right| \leq \frac{\tau^2}{2} (R_1 + h R_2)$$
for any $k=1,\ldots, N(h)$. 
Considering  the $\ell^2$-norm we have
 hence
$$\left\|\frac{F^h(x,b)}{\tau} -  f(x,b)\right\| \leq   \sqrt{N(h)} \frac{\tau}{2}(R_1 + h R_2)$$
which is (\ref{I}), whereas $I_0$ is given by (\ref{io}).



\subsection{Lipschitz continuity of the drift}

Here we verify that the drift $F^h$ of the model is Lipschitz continuous and we also find a constant for which it is bounded in $\ell^2$.
We use two tools. The first is the fact that the process

\be
M_G(t):= G(X(t)) - G(X(0)) - \int_0^t \Lambda (X(s)) ds
\label{mart}
\ee
 are martingales with respect to the filtration generated by the Markov process $X(t)_{t\geq0}$, for every $G$ in the domain of its generator.
While
the second tool is the notion of \emph{coupling} for Markov chains. 




We want to study the behavior of the two Markov chains $X^h(t,x,b)$ and $X^h(t,y,b)$ for $x\neq y$ and link them in some sense in the product space. So we could  define a coupling of these stochastic processes in terms of their distributions in the space of paths $D([0,T],S)$, the space of cadlag functions, for fixed initial points. However, for given marginal Markov processes, the resulting coupled process may not be Markovian. So we introduce the following fundamental definition, which can be found for instance in \cite{chen}.

\begin{definition}
Given two Markov processes with semigroups $U_j(t)$ and generators $\Lambda_j$, or transition probabilities $P_j(t,x_1, \cdot)$, on
$(E_j, \mathcal{E}_j)$, $j=1,2$, 
a \emph{Markovian coupling} is a Markov process with semigroup $\tilde{U}(t)$ and generator $\tilde{\Lambda}$, or transition probability
$\tilde{P}(t;x_1,x_2,\cdot)$, on the product space $(E_1\times E_2, \mathcal{E}_1 \otimes\mathcal{E}_2)$ having the marginality:
\begin{itemize}
	\item
$$\tilde{P}(t;x_1,x_2,A_1 \times E_2) = P_1(t,x_1, A_1)\quad\forall t\geq 0, x_1\in E_1, A_1 \in\mathcal{E}_1,$$
$$\tilde{P}(t;x_1,x_2,E_1 \times A_2) = P_2(t,x_2, A_2)\quad\forall t\geq 0, x_2\in E_2, A_2 \in\mathcal{E}_2.$$
Or equivalently
$$\tilde{U}(t) G(x_1,x_2) = U_1(t)G(x_1), \quad\forall t\geq 0, x_1\in E_1, G\in B(\mathcal{E}_1),$$
$$\tilde{U}(t) G(x_1,x_2) = U_2(t)G(x_2), \quad\forall t\geq 0, x_2\in E_2, G\in B(\mathcal{E}_2),$$
where $B(\mathcal{E})$ is the set of all bounded $\mathcal{E}$-measurable functions. Here on the left hand side $G$ is regarded as a bivariate function, although it depends on only one variable.

\item 
$$\tilde{\Lambda}G(x_1,x_2) = G(x_1) \quad\forall x_1\in E_1, G\in B(\mathcal{E}_1),$$
$$\tilde{\Lambda}G(x_1,x_2) = G(x_2) \quad\forall x_2\in E_2, G\in B(\mathcal{E}_2),$$
where $G$ on the left hand side is regarded as above.

\end{itemize}
\label{coupling}
\end{definition}

The main result concerning coupling of Markov chains, whose proof can be found  in \cite{chen}, is that the two parts given in the definition are equivalent under certain conditions, for instance if the two Markov chains take value in a finite state space.
A Markovian coupling always exists: the simplest is the \emph{independent coupling}. Consider a finite set $E = E_1=E_2$,
in the notations of the above definition. The generator of the independent coupling is defines by 
$$\tilde{\Lambda}G(x_1,x_2) := \Lambda_1 G(\cdot,x_2) (x_1) + \Lambda_1 G(x_1 ,\cdot) (x_2)$$
for any $G\in \mathcal{C} (E^2)$.



Hence, for fixed $h$ and $b$ (which will be omitted in the following writing) we consider \emph{any} coupling operator $\widetilde{\Lambda}_h$ of the Markov chain $X^h(t)$ and itself, which gives the semigroup $\tilde{U}^h_t$.  The coupled process will be called $(X^h(t),Y^h(t))$, although we consider the same process, to avoid misunderstandings. 
Then by  (\ref{mart}) the process 
\be
M_g(t):= g(X^h(t),Y^h(t)) - g(X^h(0),Y^h(0)) - \int_0^t \tilde{\Lambda}g (X^h(s),Y^h(s)) ds 
\label{Mg}
\ee
 is a martingale for any $g\in  \mathcal{C}(S(h)\times S(h))$.

We let 
$$E^x[G(X^h(t))] := E[G(X^h(t))|X(0)=x] = E[G(X^h(t,x))]= U^h_t G(x)$$
 and 
$$\tilde{E}^{x,y}[g(X^h(t),Y^h(t))] := \tilde{U}^h_tg(x,y) = \tilde{E}\left[g(X^h(t),Y^h(t))|(X^h(0),Y^h(0))=(x,y)\right]$$ 
for any $x$ and $y$ in $S(h)$.
Therefore the martingale $M_g$ defined in (\ref{Mg}) leads to
\be
\tilde{E}^{x,y}[g(X^h(t),Y^h(t))] - g(x,y) = \tilde{E}^{x,y} \left[\int_0^t \tilde{\Lambda} g(X^h(s),Y^h(s))ds\right]
\label{Eg}
\ee
for any $t\geq0$ and $g\in  \mathcal{C}(S(h)\times S(h))$.

Considering $g_k(x,y):= x_k-y_k$ and $G_k(x)=x$,
by means of definition \ref{coupling} and identity (\ref{landaf}), we have
$$\tilde{\Lambda} g_k(x,y) = \Lambda G_k(x) - \Lambda G_k(y) = f_k(x) - f_k(y)$$
and
$$\tilde{E}^{x,y}[g_k(X^h(\tau),Y^h(\tau))] - g_k(x,y) = E^x[X^h_k(\tau))] - x - E^y[X^h_k(\tau))]+y = F^h_k(x)-F^h_k(y)$$
	by definition (\ref{F}) of the drift. 
Hence equation (\ref{Eg}) yields
$$F^h(x) -F^h(y)= \tilde{E}^{x,y}\left[\int_0^\tau \left(f(X^h(s))-f(Y^h(s))\right) ds\right]$$
which, taking the $\ell^2$-norm and applying Fubini's theorem and the Lipschitz continuity of $f$ (\ref{lipf}), becomes
\be
||F^h(x) -F^h(y)||\leq K \int_0^\tau \tilde{E}^{x,y}\left\|X^h(s)-Y^h(s)\right\| ds
\label{Fnorm}
\ee
for any coupling operator $\tilde{\Lambda}_h$, where $K$ is defined in (\ref{K}).

Now we need the following lemma, which is stated for instance in \cite{chen2}:

\begin{lemma}
Let $U_t$ be a strongly continuous semigroup on $E$ with generator $\Lambda$ whose domain is $\mathcal{D}$, $\alpha \in \R$ be a constant and $f\in \mathcal{D}$.
Then $U_t f \leq e^{\alpha t}f$ if and only if $\Lambda f \leq \alpha f$.
\label{lemrho}
\end{lemma}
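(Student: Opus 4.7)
The plan is to prove the two implications by the standard semigroup calculus, relying on the Kolmogorov forward equation $\frac{d}{dt} U_t f = U_t \Lambda f = \Lambda U_t f$ valid on the domain $\mathcal{D}$, and on the fact that the Markov semigroup $U_t$ under consideration is positivity preserving (i.e.\ $g \leq 0 \Rightarrow U_t g \leq 0$); this monotonicity is the unstated structural hypothesis making the inequality propagate.

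For the easy direction ($U_t f \leq e^{\alpha t} f \Rightarrow \Lambda f \leq \alpha f$), I would simply subtract $f$ from both sides, divide by $t>0$, and pass to the limit $t \downarrow 0$. Since $f \in \mathcal{D}$ the left hand side converges to $\Lambda f$ in the natural sense, while the right hand side converges to $\alpha f$ because $(e^{\alpha t}-1)/t \to \alpha$. The inequality is preserved in the limit, yielding $\Lambda f \leq \alpha f$.

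For the harder direction, I would introduce the auxiliary curve $g(t) := e^{-\alpha t} U_t f$ and compute its derivative on $[0,\infty)$. Using the semigroup identity $\frac{d}{dt} U_t f = U_t \Lambda f$, I obtain
\begin{equation*}
\frac{d}{dt} g(t) = e^{-\alpha t} U_t \Lambda f - \alpha e^{-\alpha t} U_t f = e^{-\alpha t} U_t (\Lambda f - \alpha f).
\end{equation*}
By assumption $\Lambda f - \alpha f \leq 0$, and because $U_t$ is positivity preserving (equivalently, order preserving) the right hand side is $\leq 0$. Hence $g$ is non-increasing on $[0,\infty)$, so $g(t) \leq g(0) = f$, which rearranges to $U_t f \leq e^{\alpha t} f$ for all $t \geq 0$.

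The main conceptual obstacle is the justification of the monotonicity step $U_t(\Lambda f - \alpha f) \leq 0$: this requires $U_t$ to be a sub-Markov/Markov semigroup, which is the case here since $U_t$ arises from a Markov chain with bounded generator on a finite state space (see the discussion after Notation \ref{S(h)}). A minor technical point to address is the differentiability of $t \mapsto U_t f$: on the domain $\mathcal{D}$ it is differentiable in the strong sense, and the pointwise inequality interpretation is preserved because $U_t$ acts coordinatewise on the finite-dimensional state space relevant to our application.
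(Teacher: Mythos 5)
Your proof is correct and takes essentially the same route as the paper: the direction $U_tf\le e^{\alpha t}f\Rightarrow\Lambda f\le\alpha f$ is the identical difference-quotient argument, and your computation with $g(t)=e^{-\alpha t}U_tf$ is just the integrating-factor form of the Gronwall step the paper uses. If anything you are more careful, since the paper silently relies on the order preservation of $U_t$ in passing from $\Lambda f\le\alpha f$ to $\Lambda U_tf=U_t\Lambda f\le\alpha U_tf$, a hypothesis you state explicitly.
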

	
\begin{proof} 

($\Leftarrow$) Let $\Lambda f \leq \alpha f$, then by Kolmogorov's equation
$$\frac{d}{dt}U_t f = \Lambda U_t f \leq \alpha U_t f .$$
Thus by Gronwall's lemma
$$U_t f \leq e^{\alpha t}U_0 f = e^{\alpha t}f .$$

($\Rightarrow$) Let $U_t f \leq e^{\alpha t}f$, then by definition of the generator
$$\Lambda f = \lim_{t\rightarrow0} \frac{U_t f - f}{t} \leq 
\lim_{t\rightarrow0} \frac{e^{\alpha t} - 1}{t} f = \alpha f .$$

\end{proof}	
	
Hence we want to apply this lemma to the coupling operator $\tilde{\Lambda}_h$ and the function $\rho(x,y) = ||x-y||$, in order to obtain an upper bound of the right hand side in (\ref{Fnorm}).
So we have to choose a particular coupling for which there exists $\alpha \in \R$ such that the condition 	
\be
\tilde{\Lambda}_h\rho \leq \alpha \rho
\label{rho}
\ee
is satisfied.

We use  the so called \emph{coupling of marching soldiers}, introduced by Chen in 1986 and whose description can be found for instance in \cite{chen}. This gives a Markov chain in $S(h)\times S(h)$ such that, if it is in $(x,y)$, it jumps to
\begin{itemize}
	\item $(x+z,y+z)$ at rate $\min\{q(x,x+z), q(y,y+z)\}$,
	\item $(x+z,y)$ at rate $\left[q(x,x+z)- q(y,y+z)\right]^+$,
	\item $(x,y+z)$ at rate $\left[q(x,x+z)- q(y,y+z)\right]^-$,
\end{itemize}
for any $z\in S(h)$, where $q(x,y)$ are the rates, i.e. the elements of the Q-matrix, of the Markov chain $X^h$.
It is a Markov coupling, i.e. it satisfies definition \ref{coupling} part 2, since
$\min\{a,b\} + (a-b)^+ =a$ for any  $a,b \geq0$.

Set $h_{ij}=-h e_i - h e_j+h e_{i+j}$ and $h^{ij}=-h e_i + h e_j+h e_{i-j}$.  Thus the generator of the marching coupling $\tilde{\Lambda}$ of the Markov chain $X^h$, whose generator is defined in (\ref{gen}), is given by
\begin{align}
\tilde{\Lambda}_h &g(x,y) = \frac1h \sum_{i,j} \min\{C_{i j}(x) x_i x_j,C_{i j}(y) y_i y_j\} 
\left[g(x +h_{ij},y+h_{ij}) -g(x,y)\right] 
\label{marchg}
\\
&+
\frac1h \sum_{i,j} \left[C_{i j}(x) x_i x_j- C_{i j}(y) y_i y_j\right] ^+
\left[g(x+h_{ij},y) -g(x,y)\right]\nonumber
 \\
&+
 \frac1h \sum_{i,j} \left[C_{i j}(x) x_i x_j - C_{i j}(y) y_i y_j\right]^- 
\left[g(x,y+h_{ij}) -g(x,y)\right]\nonumber
 \\
&+
\frac{1}{h}\sum_i \sum_{j<i} \min\{F_{ij}(x) x_i, F_{ij}(y) y_i \} \left[g(x+h^{ij}, y+h^{ij}) -g(x,y)\right]\nonumber
\\
&+
\frac{1}{h}\sum_i \sum_{j<i} \left[F_{ij}(x) x_i- F_{ij}(y) y_i \right]^+ \left[g(x+h^{ij}, y) -g(x,y)\right]
\nonumber
\\
&+
\frac{1}{h}\sum_i \sum_{j<i} \left[F_{ij}(x) x_i- F_{ij}(y) y_i \right]^- \left[g(x, y+h^{ij}) -g(x,y)\right]
\nonumber
\end{align}
for any $g\in\mathcal{C}(S(h)\times S(h))$.
 Hence, calculating this generator (\ref{marchg}) in the distance function $\rho(x,y) = ||x-y||$, 
 we obtain
\be
\tilde{\Lambda}_h \rho(x,y) \leq 3 \sum_{i,j} \left|C_{i j}(x) x_i x_j- C_{i j}(y) y_i y_j\right|
+ 3\sum_i \sum_{j<i} \left|F_{ij}(x) x_i- F_{ij}(y) y_i \right|,
\label{marchrho}
\ee
using the identity $a^+ + a^- = |a|$ for any real number $a$ and the upper bound
$||x - y-z|| - ||x-y|| \leq ||z|| \leq 3h$ whereas $z$ can be either $-h e_i - h e_j+h e_{i+j}$ or $-h e_i + h e_j+h e_{i-j}$.

In order to get an estimate of the above equation, we shall consider the two functions 
$u,v : S\rightarrow \R^{N\times N} \cong \R^{N^2}$, where $N=N(h)$, defined by
\begin{eqnarray}
u_{ij} (x) &:=& C_{i j}(x) x_i x_j \\
v_{ij} (x) &:=& F_{ij} (x) x_i \mathbb{I}_{]0,+\infty[} (i-j).
\label{uv}
\end{eqnarray}
So the right hand side in (\ref{marchrho}) is equal to 
$$3 ||u(x) -u(y)||_{\ell^1} +3 ||v(x) -v(y)||_{\ell^1}.$$
 
The derivatives of $u$ and $v$ are given by
$$\frac{\partial u_{ij}}{\partial x_k} (x) = \frac{\partial C_{i j}(x)}{\partial x_k}x_i x_j
+ C_{i j}(x) \delta_{ik} x_j +C_{i j}(x) x_i \delta_{jk}  $$
and
$$\frac{\partial v_{ij}}{\partial x_k} (x) = \left(\frac{\partial F_{i j}(x)}{\partial x_k} x_i 
+ F_{i j}(x) \delta_{ik}\right)\mathbb{I}_{]0,+\infty[} (i-j).$$
Thus we apply the mean value theorem to get
\begin{align*}
||u(x) -u(y)&||_{\ell^1} + ||v(x) -v(y)||_{\ell^1} \\
&= \left\|\frac{\partial u }{\partial x} (z). \xi\right\|_{\ell^1}
+ \left\|\frac{\partial v }{\partial x} (w). \xi\right\|_{\ell^1} \\
&\leq \sum_{ijk} \left|\frac{\partial u_{ij}}{\partial x_k} (z) \right| |\xi_k| +
\sum_{ik} \sum_{j<i} \left|\frac{\partial v_{ij}}{\partial x_k} (w) \right| |\xi_k| \\
&\leq C(1) ||z||_{\ell^1}^2 ||\xi||_{\ell^1} + 2 C ||z||_{\ell^1} ||\xi||_{\ell^1} + F(1) ||w||_{\ell^1} ||\xi||_{\ell^1} + F||\xi||_{\ell^1}
\end{align*}
for any $x,y \in S(h)$ and for certain $z$ and $w$ in $S$, where $\xi = x-y$. The latter inequality follows from  (\ref{CF}) and (\ref{CF1}). 

Therefore   (\ref{marchrho}) becomes
$$\tilde{\Lambda}_h\rho(x,y) \leq 3(C(1)R^2 + 2CR + F(1) R +F) ||x-y||_{\ell^1}$$
for any $x$ and $y$ in $S(h)$. 
If we use the estimate $||x-y||_{\ell^1}\leq \sqrt{N} ||x-y||$ for any $x$ and $y$ in $S(h)$ then
\be
\tilde{\Lambda}_h\rho(x,y) \leq M_2 \sqrt{N(h)} ||x-y||,
\label{landarho}
\ee
where $M_2 := 3(C(1)R^2 + 2CR + F(1) R +F)$ is constant, which says that (\ref{rho}) holds with 
$\alpha:= M_2 \sqrt{N(h)} >0.$

Thus we can apply lemma \ref{lemrho} to the marching coupling, so that 
$$ \tilde{E}^{x,y}\left\|X^h(s)-Y^h(s)\right\| \leq e^{\alpha s} ||x-y||$$
for any $s\in [0,\tau]$ and $x\neq y \in S(h)$. Hence (\ref{Fnorm}) becomes
$$||F^h(x) -F^h(y)||\leq K \int_0^\tau e^{\alpha s} ||x-y|| ds \leq K\tau e^{\alpha \tau} ||x-y||,$$
which is (\ref{lipF}) where $L_1$ is the function defined in (\ref{l1}):
$$||F^h(x) -F^h(y)|| \leq K \tau e^{M_2 \tau\sqrt{N(h)} } ||x-y||.$$


\subsubsection{Boundness of the drift}

Applying  (\ref{mart}) simply to the process $X^h$, without considering couplings, we obtain
\be
F^h(x,b) = E\left[\int_0^\tau f(X^h(s,x,b))ds\right]
\label{FE}
\ee

Hence from (\ref{FE}), by means of (\ref{bound}), we get also an upper bound for the drift:
\be
||F^h(x,b)||\leq L_2 \tau
\label{boundFh}
\ee
for every $x\in S(h)$ and $b\in E$, where $L_2$ is defined in (\ref{L2}).

\subsection{Bounds for $\Delta$}

We  find an estimate for $E(\Delta_\pi^h(k) |X_\pi^h=x)$ where $\Delta_\pi^h(k)$  
is the number of coalitions  that perform a transition between time  step $k\tau$ and $(k+1)\tau$. Because of Markovianity the above expectation is independent of $k$, so we can suppose $k=0$. 
Hence we consider $E(\Delta^h|X^h(0)=x_0)$, where $\Delta^h$ is the number of coalitions that change their state between 0 and $\tau$.

If the system is in $x_0$ in $t=0$ there is an exponential clock of parameter $s(x_0,b)$ such that, when it clicks, the system changes its state, say it  goes in $x_1$. Now there is an other exponential clock of parameter $s(x_1,b)$ such that, when it clicks, the system changes its state, say it  goes in $x_2$. We repeat  this procedure until we arrive at time $\tau$. Note that $\Delta^h$ is then less or equal than the number of clicks that we get from 0 to $\tau$.

Thus to estimate $\Delta^h$ we take an upper bound of $s(x,b)$ defined in (\ref{sum})
$$s(x,b)= \sum_{i ,j} n_i n_j h C_{ij}(x,b) + \sum_i n_i F_{ij}(x,b) ,$$
for any $x\in S(h)$.
Using assumption (\ref{CF}) on $C_{ij}$ and $F_{ij}$ we have 
$$s(x,b)\leq C h \sum_{i ,j} n_i n_j   + F \sum_i n_i = C h \left(\sum_i n_i\right)^2 +F \sum_i n_i $$
for any $x$ and $b$.
According to (\ref{NhR}) 
$\sum_i n_i \leq N(h) \leq \frac Rh ,$
which gives 
$$s(x,b) \leq \frac1h (CR^2 +FR) .$$
Hence for any $x\in S(h)$ and $b\in E$, $s(x,b)$ is bounded by a constant $s^h$ that depends only on $h$:
\be
s^h := \frac1h (CR^2 +F). 
\label{sh}
\ee

For a constant $s$ the number of occurrences of clicks is known to be a Poisson process of intensity $s$. Thus for a constant $s$ the number of clicks from 0 to $\tau$ is a random variable $X$  with Poisson distribution of  parameter $s\tau$. This implies in particulat that $E(X)= s\tau$ and $E(X^2) = s\tau (1+s\tau)$. 

Hence the expectations for $\Delta^h$ are bounded by
$$E(\Delta^h|X^h(0)=x_0)\leq s^h\tau$$
and
$$E((\Delta^h)^2|X^h(0)=x_0)\leq s^h\tau( 1+s^h\tau)$$
for any $x_0\in S(h)$. 
Using (\ref{sh}) we find
$$E(\Delta^h|X^h(0)=x_0)\leq \frac1h \tau(CR^2 +F)$$
and
\begin{align*}
E((\Delta^h)^2|X^h(0)=x_0)&\leq \frac1h \tau(CR^2 +F)[ 1+\frac1h (CR^2 +F)\tau]\\
&= \frac {1}{h^2}\tau(CR^2 +F) [\tau(CR^2 +F) + h].
\end{align*}
Therefore  (\ref{delta1}) and (\ref{delta2}) hold with the  bounds specified in (\ref{i1}) and (\ref{i2}).

\begin{acknowledgement}
 Work (partially) supported by the PhD programme in Mathematical Sciences, Dipartimento di Matematica, Universit\`a di Padova (Italy) and Progetto Dottorati - Fondazione Cassa di Risparmio di Padova e Rovigo.
\end{acknowledgement}

\end{document}